\documentclass[11pt]{article}
\usepackage{amsmath,amssymb,amsthm}
\usepackage[a4paper, margin=2.5cm]{geometry}
\numberwithin{equation}{section}

\title{Low-regularity well-posedness for the ZK Equation on a half-strip}
\author{E Avelino, G Doronin}
\date{}

\newtheorem{theorem}{Theorem}
\newtheorem{proposition}{Proposition}
\newtheorem{lemma}{Lemma}

\begin{document}

\maketitle

\begin{abstract}
Studied here is the Zakharov--Kuznetsov equation with a linear transport
term posed on a half-strip with nonhomogeneous boundary condition.
Using Bourgain-type spaces adapted to the ZK dispersive structure,
anisotropic smoothing and boundary trace estimates, we establish its local well-posedness in $L^2.$
\end{abstract}

\section{Introduction}

The Zakharov--Kuznetsov (ZK) equation
\begin{equation}
\label{ZK} u_{t} + \partial_{x}\left(\Delta u + a u\right) +\frac12
\partial_x(u^2)=0
\end{equation}
arises as a multidimensional generalization of the Korteweg--de
Vries (KdV) equation in plasma physics and describes the propagation
of weakly nonlinear ion-acoustic waves in a magnetized plasma,
\cite{zk}. Equation \eqref{ZK} exhibits a strongly anisotropic
dispersive structure which combines one-dimensional KdV dispersion
in the longitudinal direction with transversal effects, \cite{LP}.

We study the initial-boundary value problem for \eqref{ZK}
posed on the half-strip $\{(x,y)\in \mathbb{R}^{2}: x>0,\ 0<y<B\}$ supplemented with
Dirichlet boundary conditions in the transversal variable, and a
nonhomogeneous boundary condition (wavemaker) at $\{x=0\}.$ More
precisely, we consider
\begin{equation}\label{IBVP}
\begin{cases}
u_{t} + u_{xxx} + u_{xyy} + a u_{x} +uu_{x}=0,\ t>0;\\
u(0,y,t)=g(t,y);\\ u(x,0,t)=u(x,B,t)=0;\\ u(x,y,0)=u_0(x,y).
\end{cases}
\end{equation}

As concerns the pure IVP posed on the whole $\mathbb{R}^{2}$, we cite \cite{F95, GH, FP}, and the best known result is probably \cite{kinoshita}. The local well-posedness in $H^{s}(\mathbb{R}^{2})$ has been proved there for $s>-1/4$. The scaling argument, however, indicates that even this negative regularity can be weaked, \cite{SL}. Therefore, the low regularity well-posedness for the ZK equation is strongly wellcomed.

Note that problems for dispersive equations posed on domains
with boundaries present substantial analytical difficulties due to
the interaction between dispersion and boundary effects, \cite{BF}. For the KdV
equation on the half-line, a systematic framework based on boundary
forcing operators was developed in \cite{BSZ, CK}, allowing to
convert the boundary condition into a forcing term supported at the
boundary, and to work in Bourgain-type spaces.

For the ZK equation, the presence of the mixed derivative $u_{xyy}$
leads to additional complications. In particular, the dispersive
relation generates a two-dimensional resonant geometry which apparently weakens
the smoothing effects compared to the one-dimensional KdV case,
\cite{GH, FP, LP, SL, MP, MR}. 
As a consequence, usual (2+1)D extensions of standard
Bourgain spaces $X^{s,b}$ are insufficient to control the
nonlinearity even at the $L^2$ level.

A systematic study of well-posedness for IBVPs on a half-strip in
the weighted Sobolev spaces has been developed in \cite{F}. The
regularity of solutions there seems not to be sharp however. Indeed, whereas the scale argument suggests $s>-1$, the IBVP \eqref{IBVP} with $u_{0}\in L^{\rho(x)}_{2}$ and $g\in H^{s/3,s}$, $s>3/2$, is shown in \cite{F} to be well-posed in weighted $L^{2}$-based Sobolev spaces $X^{\rho(x)}_{w}$. Morever, the specific weight $\rho(x)\leq C(\rho'(x))^{2}$ is essential to prove the uniqueness.

To put down the possible solutions regularity, we employ here a modified functional setting
which combines Bourgain-type spaces with energy estimates. We use
also corresponding Dirichlet configuration of boundary operators
which is physically quite reasonable and admits a good
discretization frame. More detailed, we work in the space
$$
Z^{b}_{T}:= X^{0,b}_{T} \cap L^{2}(0,T;H^{1}),
$$
which incorporates the classical Bourgain space $X^{s,b}$ and the spatial Kato-type smoothing effect available for the
linear problem. This additional structure compensates the derivative
present in the nonlinearity $\partial_x(u^2)$ and allows to
recover the necessary bilinear estimates.

The main result of this paper establishes a local well-posedness for
\eqref{IBVP} with initial data in $L^2$ and boundary data in
$H^{1/3}_t L^2_y.$ The proof relies on a construction of boundary
forcing operators, a spectral reduction in the
transversal variable, and Bourgain estimates adapted to the ZK
dispersion. Bilinear estimates are obtained in the modified space
$Z^b_T,$ and a contraction mapping is applied as a final step.

A key idea follows \cite{CK} and is that the boundary wavemaker can
be incorporated into a boundary forcing term in the corresponding
Cauchy problem on the whole $\mathbb{R}^{2}.$ It does not affect the
resonance structure of the equation; therefore, it does not
influence the regularity thresholds. The method can be viewed as a
natural extension of the framework from \cite{CK} to the ZK equation
in a bounded transversal geometry, and it can be used for posterior
negative regularity analysis, as well as for gZK consolidation.

\section{Notations and results}

\subsection{Specific notations}

Throughout the paper, $C$ and $c$ denote various positive constants whose exact values may change depending on the context. We use the notation $A \sim B$ to indicate that there exist positive constants $c$ and $C$ such that $c B \leq A \leq C B$.

We denote the spatial domain by $\Omega = \mathbb{R}^{+} \times (0,B)$. For the transverse variable $y \in (0,B)$, we consider $\{e_k\}_{k \ge 1}$ to be $e_{k}=\sqrt{2/B}\sin (\sqrt{\lambda_{k}}y)$ with $\lambda_{k}=(k\pi/B)^{2}$. Given a function $u(x,\cdot,t)\in L^{2}(0,B)$, its Fourier series expansion in the transverse variable will be described by its coefficients (or modes) $u_k(x,t) = \langle u(x,\cdot,t), e_k \rangle_{L^2_y}$ as $u=\sum_{k\geq1} u_{k}e_{k}$.

We use $\mathcal{F}$ or $\,\widehat{\cdot}\,$ to denote the space-time Fourier transform $\widehat{u}=\mathcal{F}[u]=\int e^{-i\langle\xi, s\rangle}u\,ds$. Specifically, we will denote by $\widehat{u}_k(\xi,\tau)$ the Fourier transform in the continuous variables $(x,t) \in \mathbb{R}^2$ of the transverse mode $u_k$, where $\xi$ and $\tau$ represent the spatial and temporal dual variables, respectively.

We denote by $S(t) := e^{-t\partial_x(\Delta + a)}$ the unitary group generated by the linear part of the ZK equation on the whole $\mathbb{R}^{2}$, whose 2D dispersion relation is given by $\omega(\xi,\eta) = \xi^3 + \xi\eta^2 - a\xi$. The spectral projection onto the $k$-th transverse mode induces the corresponding one-dimensional propagator $S_k(t) := e^{it\omega_k(\xi)}$, associated with the parameterized dispersion relation $\omega_k(\xi) = \xi^3 + (\lambda_k - a)\xi$. The fundamental solution of the linear operator will be written in terms of the two-dimensional Airy function, defined by
\begin{equation}\label{airyeq2d}
\hat{A}(\xi, \eta)=e^{i\xi^{3}}e^{i\xi\eta^{2}}.
\end{equation}

We choose $\theta \in C_0^\infty(\mathbb{R})$ to denote a smooth temporal cutoff function, with $0 \le \theta \le 1$, supported in a neighborhood of the origin. The Dirac delta distribution centered at the origin is denoted by $\delta_0$. For any real $x$, we use $\langle x \rangle := 1 + |x|$.

\subsection{Function spaces}

For $s \in \mathbb{R}$, $H^s$ refers to the classical $L^2$-based Sobolev space, whose norm is defined via the spatial Fourier transform
$$\|f\|_{H^s} := \left( \int \langle \xi \rangle^{2s} |\widehat{f}(\xi)|^2 d\xi \right)^{1/2}.$$

If $U$ is a Banach space equipped with the norm $\|\cdot\|_{U}$ and $T>0$, the notation $L^p_T U$ abbreviates the Bochner space $L^p(0,T; U)$ of strongly measurable functions $u: [0,T] \to U$, equipped with the norm:
$$\|u\|_{L^p_T U} := \left( \int_0^T \|u(t)\|_{U}^p dt \right)^{1/p}, \quad 1 \leq p < \infty,$$
with the usual modification for the $\operatorname{ess\, sup}$ norm when $p=\infty$. Similary,
$$\|u\|_{L^p_t L^q_{x,y}} := \left( \int_{\mathbb{R}} \left( \iint_{\Omega} |u(x,y,t)|^q dx dy \right)^{p/q} dt \right)^{1/p}.$$

Analogously, the fractional temporal space $H^{s}_{t}$ conjugated with the transverse space, say $L^{2}_{y}$, is denoted by $H^{s}_{t} L^{2}_{y}$, with its norm expressed via the Fourier transform restricted to the temporal variable
$$\|u\|_{H^s_t L^2_y} := \left( \int_{\mathbb{R}} \langle \tau \rangle^{2s} \|\mathcal{F}_t u(\cdot,\cdot,\tau)\|_{L^2_y}^2 d\tau \right)^{1/2}.$$

We follow the modified Bourgain spaces definition from \cite{CK, MP}. Let $\mathcal{S}'(\mathbb{R}^2 \times \mathbb{R})$ be the space of tempered distributions. For $u \in \mathcal{S}'(\mathbb{R}^2 \times \mathbb{R})$ whose transverse expansion is given by $u(x,y,t) = \sum_{k=1}^{\infty} u_k(x,t)e_k(y)$, we define the space $X^{s,b}$ as the closure of Schwartz functions under the norm
\begin{equation}
\begin{aligned}
\|u\|_{X^{s,b}}^2 := \sum_{k=1}^{\infty} \Bigg( &\underset{|\omega_k(\xi)| \ge 1}{\iint} \langle \tau-\omega_k(\xi)\rangle^{2b}\langle 3\xi^{2}+\lambda_{k}\rangle^{2s} |\widehat{u}_k(\xi,\tau)|^2 d\xi d\tau \\ &+ \underset{|\omega_k(\xi)| < 1}{\iint} \langle \tau \rangle^{2\alpha}\langle3\xi^{2}+\lambda_{k}\rangle^{2s} |\widehat{u}_k(\xi,\tau)|^2 d\xi d\tau \Bigg),\end{aligned}
\end{equation}
where $0<b<1/2$ and $1/2<\alpha <2/3$. The corresponding space for the non-homogeneous (or forcing) term, denoted by $Y^{s,b}$, is defined analogously by the norm
\begin{equation}
\begin{aligned}
    \|f\|_{Y^{s,b}}^2 := \sum_{k=1}^{\infty} \Bigg( &\underset{|\omega_k(\xi)| \ge 1}{\iint} \frac{\langle3\xi^{2}+\lambda_{k}\rangle^{2s}|\widehat{f}_k(\xi,\tau)|^2}{\langle \tau-\omega_k(\xi)\rangle^{2b}} d\xi d\tau \\ &+ \underset{|\omega_k(\xi)| < 1}{\iint} \frac{\langle3\xi^{2}+\lambda_{k}\rangle^{2s}|\widehat{f}_k(\xi,\tau)|^2}{\langle \tau \rangle^{2(1-\alpha)}} d\xi d\tau\\
    &+ \underset{|\omega_k(\xi)| \ge 1}{\int}\Bigg(\underset{|\omega_k(\xi)| \ge 1}{\int} \frac{\langle3\xi^{2}+\lambda_{k}\rangle^{s}|\widehat{f}_k(\xi,\tau)|}{\langle \tau-\omega_k(\xi)\rangle} d\tau\Bigg)^{2} d\xi\Bigg).
    \end{aligned}
\end{equation}
For a finite $T>0$, the restrictions of the above spaces to the temporal interval $[0,T]$ will be denoted with a subscript $T$, like $X^{s,b}_{T}$, $Y^{s,b}_{T}$ and $Z^{b}_{T}$, equipped with the norm
\begin{equation}\|u\|_{X^{s,b}_T} := \inf \big\{ \|v\|_{X^{s,b}} \,:\, v \in X^{s,b} \text{ with } v|_{[0,T]} = u \big\}.\end{equation}

\subsection{Main result}\label{sub2.3}

Finally, the solution space $Z^b_T$ for the local theory is structured as the intersection of the modified Bourgain space with the natural energy space of the equation. For $0< b < 1/2$, we define
\begin{equation}
Z^b_{T} := X^{0,b}_T \cap L^2(0,T; \dot{H}^1_{x,y}),\ \dot{H}^{1}_{x,y}=H^{1}(\mathbb{R}_{+})\times H^{1}_{0}(0,B),\end{equation}
equipped with the sum norm $\|u\|_{Z^b_T} = \|u\|_{X^{0,b}_T} + \|u\|_{L^2_T \dot{H}^1_{x,y}}$.

Our main result is

\begin{theorem}{Theorem}\label{Localwellposedness}
For any $u_{0}\in L^{2}(\Omega)$ and $g\in H^{1/3}_{t}(0,T; L^{2}_{y}(0,B))$, there exist $T_{0}>0$, and a unique solution $u(x,y,t)$ to \eqref{PVIC} such that
\begin{equation*}
u\in C([0,T_{0}];L^{2}(\Omega))\cap Z^{b}_{T_{0}}.
\end{equation*}
Furthermore, the data-to-solution map $(u_{0},g) \mapsto u$ is Lipschitz continuous.
\end{theorem}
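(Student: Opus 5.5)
The plan is to follow the Colliander--Kenig boundary-forcing strategy, mode by mode in the transversal variable. First, extend $u_0$ to $\tilde u_0\in L^2(\mathbb{R}\times(0,B))$ and $g$ to $H^{1/3}_t(\mathbb{R};L^2_y)$ with comparable norms. For each transverse mode $k$, the problem \eqref{IBVP} projects to a KdV-type equation
\[
\partial_t u_k+\partial_x^3u_k+(\lambda_k-a)\partial_xu_k+\tfrac12\big(\partial_x(u^2)\big)_k=0,\qquad u_k(0,t)=g_k(t).
\]
I will look for the solution as the restriction to $x>0$ of a solution of the integral equation on $\mathbb{R}^2$:
\[
u=\theta(t)S(t)\tilde u_0-\tfrac12\theta(t)\int_0^tS(t-t')\partial_x(u^2)\,dt'+\theta(t)\mathcal{L}h .
\]
Here $\mathcal{L}h=\sum_k(\mathcal{L}_kh_k)e_k$ is the boundary forcing operator. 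For each $k$, $\mathcal{L}_kh_k$ is the Duhamel integral of $\delta_0(x)h_k(t)$ under $S_k$. It is built from the Airy kernel \eqref{airyeq2d} restricted to the frequency $\eta^2=\lambda_k$. The unknown boundary density $h=\sum_kh_ke_k$ is chosen so that the trace at $x=0$ equals $g$ minus the traces of the other two terms. Inverting the trace of $\mathcal{L}_k$ is a Riemann--Liouville fractional integral of order $2/3$, $h_k=c\,\mathcal{I}_{-2/3}(\cdot)$. The required regularity is then $h\in H^{-1/3}_tL^2_y$, i.e. $g\in H^{1/3}_tL^2_y$. This matches the hypothesis. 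The shift $\lambda_k-a$ only affects low frequencies and is handled by the $\langle\tau\rangle^{\alpha}$ part of the norm.

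The linear estimates come next; each is proved for fixed $k$ with constants uniform in $k$, and then summed in $\ell^2_k$.
\begin{itemize}
\item[(i)] Free group: $\|\theta S(t)\tilde u_0\|_{Z^b}\lesssim\|u_0\|_{L^2}$. The $L^2_TH^1$ part uses Kato smoothing on $\mathbb{R}$ with $L^2$-orthogonality in $k$. Since $\|\partial_y u_k e_k\|\sim\sqrt{\lambda_k}\,\|u_k\|$, the $\partial_y$-component needs the weight $\langle3\xi^2+\lambda_k\rangle$ built into the norms; I would derive it via a local-in-space smoothing estimate plus the energy identity on the half-strip. Since that identity carries a boundary flux, this component is only controlled after localizing in $x$ and taking $T$ small.
\item[(ii)] Duhamel: $\|\theta\int_0^tS(t-t')F\|_{Z^b}\lesssim\|F\|_{Y^{0,b}}$. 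The third term in the $Y$-norm is exactly what gives the $C_tL^2$ bound.
\item[(iii)] Boundary forcing: $\|\theta\mathcal{L}h\|_{Z^b\cap C_tL^2}\lesssim\|h\|_{H^{-1/3}_tL^2_y}$, using the explicit Fourier representation of $\mathcal{L}_k$ as in \cite{CK}.
\item[(iv)] Traces: the traces at $x=0$ of the free and Duhamel terms lie in $H^{1/3}_tL^2_y$, with bounds by $\|u_0\|_{L^2}$ and $\|F\|_{Y^{0,b}}$. This is the Kenig--Ponce--Vega sharp smoothing $\|\partial_x S_k(t)\phi\|_{L^\infty_xL^2_t}\lesssim\|\phi\|_{L^2}$ rescaled to $1/3$ of a time derivative.
\end{itemize}

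The main obstacle is the bilinear estimate. The target is
\[
\|\partial_x(uv)\|_{Y^{0,b}}\lesssim T^{\epsilon}\|u\|_{Z^b_T}\|v\|_{Z^b_T},
\]
and in $X^{0,b}$ alone with $b<1/2$ it fails. The $L^2_TH^1$ component absorbs the derivative. Where the output frequency is not much larger than an input frequency, I put $\partial_x$ on that factor and use
\[
\|\partial_x(uv)\|_{L^1_TL^2}\le\|u_x\|_{L^2_TL^\infty}\cdots,
\]
or rather Hölder as $\|u\|_{L^2_TH^1}\|v\|_{L^\infty_TL^2}$ combined with the 2D Sobolev embedding $H^1\subset L^4$ on the strip. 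This gives $\|uv\|_{L^2_TL^2}\lesssim T^{1/4}$-type gains. The $Y$-norm is then bounded using $\langle\tau-\omega_k\rangle^{-b}$ and $L^2_t\subset$ the dual. In the high-high-to-low regime I use the resonance relation
\[
\omega_{k}(\xi)-\omega_{k_1}(\xi_1)-\omega_{k_2}(\xi_2)
\]
with $k=\pm k_1\pm k_2$ from products of sines. When $k$ interactions are non-resonant it gives $\max\langle\sigma_j\rangle\gtrsim|\xi\xi_1\xi_2|$; the resonant set, where transversal frequencies cancel this, is exactly where the $H^1$ smoothing is used instead. I expect the case analysis of this resonant set, and showing the third component of the $Y$-norm is controlled there, to be the hardest step.

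Finally, set $\Gamma(u)$ to be the right-hand side above with $h$ determined by $g$ and $u$. On a ball of $Z^b_T\cap C([0,T];L^2)$ of radius $2C(\|u_0\|_{L^2}+\|g\|_{H^{1/3}_tL^2_y})$, estimates (i)–(iv) and the bilinear bound with the factor $T^{\epsilon}$ make $\Gamma$ a contraction for $T_0$ small. Restricting to $\Omega\times[0,T_0]$ gives existence. Uniqueness holds in the class $Z^b_{T_0}$ via the restriction norm. Lipschitz dependence on $(u_0,g)$ follows from the same difference estimates.
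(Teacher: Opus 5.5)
Your architecture matches the paper's: Colliander--Kenig $\delta_0$-forcing mode by mode, Riemann--Liouville inversion, linear estimates summed in $k$, and a contraction in $Z^b_T$. However, two steps you rely on fail, and the paper's own argument does not supply them either.

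\textbf{The $L^2_T\dot H^1$ component.} Your estimates (i) and (ii) in the $Z^b$ norm are false.

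For (i): each $S_k(t)$ is a unimodular Fourier multiplier, so $\|S(t)\phi\|_{H^1}=\|\phi\|_{H^1}$ for every $t$. Hence $\|\theta S(t)\phi\|_{L^2_TH^1}\sim T^{1/2}\|\phi\|_{H^1}$, which is infinite for $\phi\in L^2\setminus H^1$. Restricting to $x>0$ does not help: a high-frequency wave packet placed far enough from $x=0$ does not reach the boundary during $[0,T]$.

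For (ii): $F=\theta(t)S(t)\phi$ has $\|F\|_{Y^{0,b}}\le C_\theta\|\phi\|_{L^2}$, since each of the three pieces of the $Y$-norm is bounded this way. Yet its Duhamel integral is $\big(\int_0^t\theta\big)S(t)\phi$.

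Kato smoothing is local in $x$, or $L^\infty_x$ in the sharp KPV form. ``Localizing in $x$'' is exactly what the global norm $L^2(0,T;\dot H^1_{x,y})$ in $Z^b_T$ does not allow, and shrinking $T$ only gains $T^{1/2}$ against an infinite quantity. So your map $\Gamma$ does not send a ball of $Z^b_T$ into itself. The paper has the same hole: \eqref{FP1}--\eqref{FP3} bound only the $X^{0,b}$ norm, and the step to $\|\Phi(v)\|_{Z^b}\le M+KT^\beta\|v\|_{Z^b}^2$ silently drops the $L^2_T\dot H^1$ part.

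A repair would replace $L^2_T\dot H^1$ by a local-smoothing norm, for example $\|\nabla u\|_{L^\infty_xL^2_{y,t}}$ or a norm weighted by $\rho'(x)$ as in Faminskii. The bilinear estimate would then have to be redone with that weaker norm, typically paired with a maximal-function norm, and that is where the real work lies. Your H\"older step $\|u\|_{L^2_TH^1}\|v\|_{L^\infty_TL^2}$ with $H^1\subset L^4$ also does not close as written, because $L^\infty_TL^2$ gives no $L^4$ control of $v$.

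\textbf{Uniformity in $k$ of the boundary inversion.} You claim the trace of $\mathcal L_k$ is $c\,\mathcal I_{2/3}$ up to a low-frequency correction. This is wrong for large $k$.

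The trace kernel is $K_k(t)=(3t)^{-1/3}\mathrm{Ai}\big(3^{-1/3}(\lambda_k-a)t^{2/3}\big)$, so the shift is negligible only for $|\tau|\gg\lambda_k^{3/2}$. Its symbol satisfies $|m_k(\tau)|\sim(|\tau|^{2/3}+\lambda_k)^{-1}$; for instance $m_k(0)=\int_0^\infty K_k\,dt=\frac{1}{2(\lambda_k-a)}$. The range $|\tau|\lesssim\lambda_k^{3/2}$ is unbounded in $k$. It is unrelated to the set $\{|\omega_k|<1\}$, which shrinks to $|\xi|\lesssim\lambda_k^{-1}$ and is where the $\langle\tau\rangle^\alpha$ weight acts. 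Consequently $h_k=g_k/m_k$ satisfies $|\widehat h_k|\sim(|\tau|^{2/3}+\lambda_k)|\widehat g_k|$, and $\|h\|_{H^{-1/3}_tL^2_y}$ is not controlled by $\|g\|_{H^{1/3}_tL^2_y}$.

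Concretely, the low frequencies of mode $k$ are transported in the direction of decreasing $x$ with speed about $\lambda_k$. To produce the trace $g_k$, the $\delta_0$ source must therefore emit into $x<0$ a wave of $L^2$ norm about $2\lambda_k^{1/2}\|g_k\|_{L^2_t}$. Since $\|\cdot\|_{X^{0,b}}\ge\|\cdot\|_{L^2}$, bounding the boundary term in $X^{0,b}$ roughly requires $\sum_k\lambda_k\|g_k\|_{L^2_t}^2<\infty$, that is, $g\in L^2_tH^1_y$ in addition to $H^{1/3}_tL^2_y$.

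This is exactly the extra regularity that traces of free $L^2$ solutions already have. For $\lambda_k\ge 2a$, the substitution $\tau=\omega_k(\xi)$ gives $\int(|\tau|^{2/3}+\lambda_k)|\widehat{g}_k(\tau)|^2\,d\tau\lesssim\|u_{0,k}\|_{L^2}^2$ when $g_k=S_k(t)u_{0,k}|_{x=0}$. The paper's multiplier $M(k,\tau)$ has the same defect, because it comes from a first-order expansion of the Airy kernel in $\lambda_kt^{2/3}$. Following it would not fix this step.
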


\section{Reducing to the Cauchy problem}

Recall that we consider the initial boundary value problem (IBVP) for the Zakharov-Kuznetsov equation defined on the spatial half-strip $\Omega = (0, \infty) \times (0, B)$, given by:
\begin{equation}\label{PVIC}\begin{cases}
u_t + au_{x} + u_{xxx} + u_{xyy} + u u_x = 0, & (x,y) \in \Omega, \ t \in (0, T), \\
u(x,y,0) = u_0(x,y), & (x,y) \in \Omega, \\
u(0,y,t) = g(y,t), & y \in (0, B), \ t \in (0, T),\\
u(x,0,t)=u(x,B,t)=0 & x\in (0,\infty), \ t\in (0,T),
\end{cases}\end{equation}
where $u_{0}$ and $g$ satisfy appropriate regularity hypotheses and $T>0$. This problem will be solved in $\mathcal{S}'$ using an initial value problem given by:
\begin{equation}\label{PVI}\begin{cases}
\tilde{u}_t + a\tilde{u}_{x} + \tilde{u}_{xxx} + \tilde{u}_{xyy} + \tilde{u} \tilde{u}_x = \delta_{0}(x)f(y,t), & (x,y) \in \mathbb{R}^{2}, \ t \in (0, T); \\
\tilde{u}(x,y,0) = \tilde{u}_0(x,y), & (x,y) \in \mathbb{R}^{2};
\end{cases}\end{equation}
where $\delta_{0}$ is the Dirac delta at $x=0$ and $\tilde{u}_{0}$ is a suitable extension of $u_{0}$. The forcing function $f$ must be selected such that
\begin{eqnarray}
    \tilde{u}(0,y,t)=\tilde{g}(y,t), &\ t\in (0,T),\label{x=0}\\
    \tilde{u}(x,0,t)=\tilde{u}(x,B,t)=0, &\ t\in (0,T),\label{y=B=0}
\end{eqnarray}
where $\tilde{g}$ is a $L_{y}^{2}$ extension of $g$. Constructing the solution $\tilde{u}$ of the IVP \eqref{PVI}, we obtain the solution $u$ of the IBVP \eqref{PVIC} by restriction
\begin{equation}
    u=\tilde{u}|_{\lbrace x>0\rbrace\times(0,B)\times (0,T)}.
\end{equation}
The linear formulation of the IVP \eqref{PVI}, obtained by suppressing the advective term $\tilde{u}\tilde{u}_{x}$, is solved using Duhamel's formula,
\begin{equation}
    \tilde{u}(x,y,t)=S(t)\tilde{u}_{0}(x,y)+\int_{0}^{t}S(t-t')\delta_{0}(x)f(y,t')\ dt',
\end{equation}
where $S(t)$ is the linear operator given by
\begin{equation}\label{S(t)u0}
    S(t)v(x,y) = \int_{\mathbb{R}^2} e^{i(x\xi + y\eta + t(\xi^{3}+\xi\eta^{2}-a\xi))} \widehat{v}(\xi, \eta) \, d\xi d\eta
\end{equation}
with
\begin{equation}
    \hat{v}(\xi, \eta)=\mathcal{F}_{x,y}[v(x,y)](\xi,\eta)=\int_{\mathbb{R}^{2}} e^{-i(x\xi+y\eta)}v(x,y)\, dx\, dy.
\end{equation}
Considering $A(x,y)$ defined by \eqref{airyeq2d}, the semigroup \eqref{S(t)u0} becomes
\begin{equation}\label{S(t)airy}
S(t)v(x,y)=\frac{1}{t^{2/3}} \int_{\mathbb{R}^2} A\left( \frac{x-at-x'}{t^{1/3}}, \frac{y-y'}{t^{1/3}} \right) v(x',y') \, dx' dy'.
\end{equation}

\section{Boundary forcing}

\subsection{Inverse boundary operator}

We describe how to select the forcing term $f$ in the above context and reveal the natural regularity relations between $\tilde{u}_0$, $\tilde{g}$, and $f$. The boundary condition \eqref{x=0} determines the forcing function $f$ if one solves
\begin{equation} \label{eq:contorno_forca}
    \left. \int_0^t S(t-t')\delta_0(x) f(y,t') \, dt' \right|_{x=0} = \tilde{g}(y,t) - \left. S(t)\tilde{u}_0(x,y) \right|_{x=0}
\end{equation}
for $f$ in terms of $\tilde{g}$. The right-hand side of \eqref{eq:contorno_forca} suggests that the regularity properties of $\tilde{g}(y,t)$ should be at least the same as those expressed by the linear trace $\left. S(t)\tilde{u}_0(x,y) \right|_{x=0}$. For $\tilde{u}_0 \in L^2(\mathbb{R}^2)$, the local smoothing argument shows that $\left. S(t)\tilde{u}_0 \right|_{x=0} \in H^{1/3}(\mathbb{R}_t; L^2_y)$, as we prove below in Section \ref{5.2}.

Denote $\tilde{g}_1(y,t) = \tilde{g}(y,t) - \left. S(t)\tilde{u}_0(x,y) \right|_{x=0} \in H^{1/3}_{t}L^{2}_{y}$. Then \eqref{S(t)airy} and \eqref{eq:contorno_forca} imply
\begin{equation}\label{g1tilde}
    \tilde{g}_1(y,t)=\int_0^t \int_{\mathbb{R}} \frac{1}{(t-t')^{2/3}} A\left( -\frac{a(t-t')}{(t-t')^{1/3}}, \frac{y-y'}{(t-t')^{1/3}} \right) f(y', t') \, dy' dt'.
\end{equation}
Using \eqref{airyeq2d} backward, \eqref{g1tilde} gives
\begin{equation}
\tilde{g}_1(y,t)=\int_{0}^{t} (t-t')^{-2/3} \int_{\mathbb{R}^3} e^{-ia(t-t')^{2/3}\xi} e^{i \frac{(y-y')}{(t-t')^{1/3}} \eta} e^{i \xi^3} e^{i \xi\eta^2} \, d\xi \, d\eta \, f(y', t') \, dy' \, dt'.\end{equation}
Using the Fourier transform in $y'$, we factor out the $\xi$ terms to obtain
\begin{equation}\label{factorxi}
\tilde{g}_1(y,t)=\int_{0}^{t} (t-t')^{-2/3} \int_{\mathbb{R}} e^{i \frac{y}{(t-t')^{1/3}} \eta}\left(\int_{\mathbb{R}} e^{-i \xi \left(a(t-t')^{2/3} - \eta^2\right)} e^{i \xi^3} d\xi\right)\, \hat{f} \left( \frac{\eta}{(t-t')^{1/3}}, t' \right) d\eta \, dt'.\end{equation}
Making the change of variable $w = \xi^3$, \eqref{factorxi} becomes
\begin{equation}\label{changew}\begin{aligned}
\tilde{g}_1(y,t)=&\int_{0}^{t} (t-t')^{-2/3} \int_{\mathbb{R}} e^{i \frac{y}{(t-t')^{1/3}} \eta} \left(\int_{\mathbb{R}}e^{-i \left(a(t-t')^{2/3} - \eta^2\right) w^{1/3}} e^{iw(1)}\frac{dw}{3w^{2/3}}\right)\\ &\hat{f}\left(\frac{\eta}{(t-t')^{1/3}}, t'\right) \, d\eta \, dt'.\end{aligned}\end{equation}
We use the fact that
\begin{equation}
    \mathcal{F}^{-1} \left( \frac{e^{-i (a(t-t')^{2/3} - \eta^2) w^{1/3}}}{w^{2/3}} \right) (1) = C \left( 1 - (a(t-t')^{2/3} - \eta^2) \right) \text{sgn}\left( 1 - (a(t-t')^{2/3} - \eta^2) \right),
\end{equation}
where $C$ is the constant generated by the inverse Fourier transform. Note that here we choose $t>0$ to be sufficiently small. Thus, \eqref{changew} becomes
\begin{equation}\label{eqresfor} C \int_{0}^{t}\left[ (t-t')^{-1/3} f(y,t') -a(t-t')^{1/3} f(y,t') - (t-t')^{1/3} \partial_y^2 f(y,t')\right] \, dt'=\tilde{g}_{1}(y,t).\end{equation}
Recalling the theory of the Riemann-Liouville fractional integral \cite{CK}, given by
\begin{equation}
\mathcal{I}_{\gamma}[h](t) = \frac{1}{\Gamma(\gamma)} \int_0^t (t-s)^{\gamma-1} h(s) \, ds, \quad t > 0,\end{equation}
we write equation \eqref{eqresfor} to be
\begin{equation}
C\left( \Gamma(2/3) \mathcal{I}_{2/3} f(y,t) - \Gamma(4/3) a \mathcal{I}_{4/3} f(y,t) - \Gamma(4/3) \mathcal{I}_{4/3} \partial_y^2 f(y,t)\right)=\tilde{g}_{1}(y,t).
\end{equation}
Apply $\mathcal{I}_{-2/3}$ to the equation, using the facts that $\mathcal{I}_{\gamma}\mathcal{I}_{\beta}[h]=\mathcal{I}_{\gamma+\beta}[h]$ and $\mathcal{I}_{0}=I\!d$. The result gives
\begin{equation}
    f(y,t) - \frac{\Gamma(4/3)}{\Gamma(2/3)} a \mathcal{I}_{2/3} f(y,t) - \frac{\Gamma(4/3)}{\Gamma(2/3)} \mathcal{I}_{2/3} \partial_y^2 f(y,t)=\frac{\mathcal{I}_{-2/3}\tilde{g}_{1}(y,t)}{C\ \Gamma(2/3)}.
\end{equation}
Applying the Fourier transform in $y$ and rearranging, this yields
\begin{equation}
    \left[I\!d - C_{\Gamma}(a+\partial^{2}_{y})\mathcal{I}_{2/3} \right] \mathcal{F}_{y}f(\eta,t)=\frac{\mathcal{I}_{-2/3}\mathcal{F}_{y}\tilde{g}_{1}(\eta,t)}{C\ \Gamma(2/3)},
\end{equation}
where $C_{\Gamma}=\Gamma(4/3)/\Gamma(2/3)<1$. Thus, the operator on the left-hand side is invertible via the Neumann series \cite{podlubny1999}, and can be written as
\begin{equation}\label{f(y,t)RL}
    f(y,t)=\sum_{n=0}^{\infty} C_{\Gamma}^{n}(\mathcal{I}_{2/3}(a+\partial_{y}^{2}))^{n}\left(\frac{\mathcal{I}_{-2/3}\tilde{g}_{1}(y,t)}{C\ \Gamma(2/3)} \right).
\end{equation}

\subsection{Spectral reduction}

We expand $\tilde{g}_1(y,t)$ in an orthogonal sine series in the variable $y \in (0,B)$:
\begin{equation}
\tilde{g}_1(y,t) = \sum_{k=1}^{\infty} a_k(t) \sin\left(\frac{k\pi y}{B}\right)\end{equation}
with
\begin{equation}
a_k(t) = \frac{2}{B} \int_0^B \tilde{g}_1(y,t) \sin\left(\frac{k\pi y}{B}\right) \, dy.\end{equation}
Furthermore, the condition $\tilde{g}_1 \in H^{1/3}_t L^2_{y}$ ensures that the sequence of coefficients satisfies
\begin{equation}
\sum_{k=1}^{\infty} k^2 \|a_k\|_{H^{1/3}_{t}}^2 \sim \|\tilde{g}_1\|_{H^{1/3}_t L^2_y}^2 < \infty.
\end{equation}
Note that the differential operator $a+\partial^{2}_{y}$ acts purely as a scalar multiplier. Indeed, denote the associated eigenvalues by $\sigma_{k} = a - \left(\frac{k\pi}{B}\right)^2$. Projecting the $n$-th term of the series $f(y,t)$ onto the $k$-th spatial mode and into the temporal Fourier space, we obtain:
\begin{equation}\widehat{f}_{n,k}(\tau) = C_{\Gamma}^{n} \sigma_{k}^{n} (i\tau)^{-2n/3} \left[ \frac{(i\tau)^{2/3}}{C \Gamma(2/3)} \widehat{a}_{k}(\tau) \right].\end{equation}
Summing over $n \geq 0$, the complete transform of the $k$-th mode of $f$ can be written as a geometric series with the ratio $r_{k}(\tau) = C_{\Gamma} \sigma_{k} (i\tau)^{-2/3}$ as follows:
\begin{equation}
\widehat{f}_{k}(\tau) = \sum_{n=0}^{\infty} \left[ C_{\Gamma} \sigma_{k} (i\tau)^{-2/3} \right]^{n} \left[ \frac{(i\tau)^{2/3}}{C \Gamma(2/3)} \widehat{a}_{k}(\tau) \right].\end{equation}
Evaluating the sum of this series, $\frac{1}{1-r_k(\tau)}$, and simplifying the fractional terms, we isolate the multiplier of the inverse operator to get
\begin{equation}
\widehat{f}_{k}(\tau) = \frac{(i\tau)^{4/3}}{(i\tau)^{2/3} - C_{\Gamma} \sigma_{k}} \cdot \frac{1}{C \Gamma(2/3)} \widehat{a}_{k}(\tau).\end{equation}
To determine the regularity of $f(y,t)$, it suffices to analyze the asymptotic behavior of the resulting multiplier $M(k,\tau) = \frac{(i\tau)^{4/3}}{(i\tau)^{2/3} - C_{\Gamma} \sigma_{k}}$. Since $\sigma_{k} \sim -k^{2}$ for sufficiently high spatial frequencies, the denominator is strictly bounded below by a combination of both frequencies, ensuring that the multiplier satisfies
\begin{equation}
|M(k,\tau)| \sim \frac{|\tau|^{4/3}}{|\tau|^{2/3} + k^2} \leq  c|\tau|^{2/3}.
\end{equation}
Consequently,
\begin{equation}\label{serief}
\|f\|_{H^{-1/3}_t L^2_y}^2 \sim \sum_{k=1}^{\infty} k^2 \int_{\mathbb{R}} \langle \tau \rangle^{-2/3} |M(k,\tau)|^2 |\widehat{a}_k(\tau)|^2 \, d\tau \leq  c\sum_{k=1}^{\infty} k^2 \int_{\mathbb{R}} \langle \tau \rangle^{2/3} |\widehat{a}_k(\tau)|^2 \, d\tau < \infty.\end{equation}
Thus, by using the sine basis in $y$, the series in \eqref{serief} converges, and the forcing $f$ in terms of $g$ is given by equations \eqref{g1tilde} and \eqref{f(y,t)RL} with $f \in H^{-1/3}(\mathbb{R}_{t} ; L^{2}_{y}(0,B))$ where $H^{-1/3}=\left(H^{1/3}\right)'$ in the sense of Proposition 2.1 from \cite{CK}.

\section{Linear problem and modified Bourgain spaces}

\subsection{Linear setting}

To prove the existence and uniqueness of the solution to \eqref{PVIC}, we compose the Fourier series expansion with the theory of Bourgain spaces adapted for domains with boundaries. Let us consider the following linear IBVP:
\begin{equation}\label{PVIC_linear}
\begin{cases}
u_t + au_x + u_{xxx} + u_{xyy} = 0, & (x, y) \in \Omega, \,\, t > 0, \\
u(x, y, 0) = u_{0}(x, y), & u_{0} \in L^{2}(\Omega), \\
u(0, y, t) = g(y, t), & g \in H^{1/3}_{t} L^{2}_{y}, \\
u(x, 0, t) = u(x, B, t) = 0, & x \in (0,\infty), \,\, t > 0.
\end{cases}\end{equation}
Given the homogeneous Dirichlet conditions at $y=0$ and $y=B$, the differential operator in $y$ is self-adjoint and allows to expand the solution in orthogonal sine series.
We project $u(x,y,t)$, $u_0(x,y)$ and $g(y,t)$ onto the basis $e_{k}(y) = \sqrt{\frac{2}{B}} \sin\left(\frac{k\pi y}{B}\right)$ to be $$u(x, y, t) = \sum_{k=1}^{\infty} u_{k}(x, t) e_{k}(y),$$$$u_0(x, y) = \sum_{k=1}^{\infty} u_{k,0}(x) e_{k}(y),$$$$g(y, t) = \sum_{k=1}^{\infty} g_{k}(t) e_{k}(y).$$

For each $k \in \mathbb{N}$, the mode $u_{k}(x,t)$ solves a linear KdV-type equation on the half-line $x > 0$:\begin{equation}\partial_t u_{k} + (a-\lambda_{k}) \partial_x u_{k} + \partial_x^3 u_{k} = 0,\end{equation}
with $\lambda_k = \left(\frac{k\pi}{B}\right)^2$.

The conditions for each $k$ become $u_{k}(x, 0) = u_{k,0}(x) \in L^2(\mathbb{R}^+)$ and $u_{k}(0, t) = g_{k}(t) \in H^{1/3}_t$.

Considering the dispersive characteristics of the operator and aiming to avoid the degeneracy in the low-frequency region, we use the modified version of the Bourgain spaces: for $0<b<1/2$ and a fixed parameter $1/2 < \alpha < 2/3$, we define the space $X^{0,b}$ by the norm:
\begin{equation}
\|u\|_{X^{0,b}}^2 := \sum_{k=1}^{\infty} \left( \underset{|\omega_k(\xi)| \ge 1}{\iint} \langle \tau-\omega_k(\xi)\rangle^{2b} |\widehat{u}_k(\xi,\tau)|^2 d\xi d\tau + \underset{|\omega_k(\xi)| < 1}{\iint} \langle \tau \rangle^{2\alpha} |\widehat{u}_k(\xi,\tau)|^2 d\xi d\tau \right),\end{equation}
where $\langle\ .\ \rangle=1+|\ .\ |$, $\omega_{k}(\xi)=\xi^{3}+(\lambda_{k}-a)\xi$ and $\widehat{u}_k(\xi,\tau)$ is the Fourier transform in $x$ and $t$ of $u_k$. The corresponding space for the non-homogeneous (or forcing) term, denoted by $Y^{0,b}$, is defined analogously by the norm:
\begin{eqnarray}
    \|f\|_{Y^{0,b}}^2 &:=& \sum_{k=1}^{\infty} \left( \underset{|\omega_k(\xi)| \ge 1}{\iint} \frac{|\widehat{f}_k(\xi,\tau)|^2}{\langle \tau-\omega_k(\xi)\rangle^{2b}} d\xi d\tau + \underset{|\omega_k(\xi)| < 1}{\iint} \frac{|\widehat{f}_k(\xi,\tau)|^2}{\langle \tau \rangle^{2(1-\alpha)}} d\xi d\tau \right)\\ &+& \sum_{k=1}^{\infty} \underset{|\omega_{k}(\xi)| \ge 1}{\int} \left( \underset{|\omega_{k}(\xi)| \ge 1}{\int} \frac{|\widehat{f}_{k}(\xi,\tau)|}{\langle \tau-\omega_{k}(\xi)\rangle} d\tau \right)^{2} d\xi.\nonumber
\end{eqnarray}
For the study of the initial boundary value problem in a finite time $T > 0$, we consider the standard restriction of the space-time spaces, \cite{MP}. We define $X^{0,b}_T$ as the space of restrictions to the domain $t \in [0,T]$, equipped with the usual quotient norm:
\begin{equation}\|u\|_{X^{0,b}_T} := \inf \big\{ \|v\|_{X^{0,b}} \,:\, v \in X^{0,b} \text{ with } v|_{[0,T]} = u \big\}.\end{equation}

Finally, to ensure the necessary regularity in the nonlinear estimates and in the control of the spatial traces on the boundary, the full solution space $Z^b_T$ for the local theory is structured as the intersection of the modified Bourgain space with the natural energy space of the equation. For $0< b < 1/2$ and $1/2<\alpha <2/3$, we define:
\begin{equation}
Z^{b}_{T} := X^{0,b}_{T} \cap L^{2}(0,T; \dot{H}^{1}_{x,y}),\end{equation}
equipped with the sum norm $\|u\|_{Z^{b}_{T}} = \|u\|_{X^{0,b}_{T}} + \|u\|_{L^{2}_{T} \dot{H}^{1}_{x,y}}$.

\subsection{Linear estimates}\label{5.2}

\begin{lemma}
Let $\theta \in C_{0}^{\infty}(\mathbb{R})$ be a cutoff function supported in $[-T, T]$, where $T<1$. For any $u_{0}\in L^{2}(\Omega)$, we have
\begin{equation}
\|\theta(t)S(t)u_0\|_{X_{T}^{0,b}} \leq C_\theta \|u_0\|_{L^2_{x,y}}.
\end{equation}
\end{lemma}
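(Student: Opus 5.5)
The plan is to reduce the estimate mode by mode in the transverse variable and then compute the space-time Fourier transform of $\theta(t)S(t)u_0$ explicitly. Since the $X^{0,b}_T$ norm is an infimum over extensions, it suffices to bound the global norm $\|\theta(t)S(t)u_0\|_{X^{0,b}}$. Here $u_0$ is understood through its extension $\tilde u_0\in L^2(\mathbb{R}\times(0,B))$ with $\|\tilde u_0\|_{L^2}\le C\|u_0\|_{L^2(\Omega)}$, for instance the zero extension. Expanding $\tilde u_0=\sum_k u_{k,0}e_k$, the group acts diagonally: $S(t)\tilde u_0=\sum_k (S_k(t)u_{k,0})e_k$, with $\mathcal{F}_x[S_k(t)u_{k,0}](\xi)=e^{it\omega_k(\xi)}\widehat{u}_{k,0}(\xi)$. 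Taking the Fourier transform in $t$ gives
\begin{equation*}
\mathcal{F}_{x,t}\big[\theta(t)S_k(t)u_{k,0}\big](\xi,\tau)=\widehat{\theta}(\tau-\omega_k(\xi))\,\widehat{u}_{k,0}(\xi).
\end{equation*}
By Parseval in $y$ (orthonormality of $\{e_k\}$), $\|\tilde u_0\|_{L^2}^2=\sum_k\|\widehat{u}_{k,0}\|_{L^2_\xi}^2$. It therefore suffices to show that each mode obeys the estimate with a constant independent of $k$.

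For fixed $k$, the norm splits into the two regions in the definition of $X^{0,b}$. On $|\omega_k(\xi)|\ge1$, the change of variables $\sigma=\tau-\omega_k(\xi)$ gives
\begin{equation*}
\iint_{|\omega_k|\ge1}\langle\tau-\omega_k(\xi)\rangle^{2b}|\widehat\theta(\tau-\omega_k(\xi))|^2|\widehat u_{k,0}(\xi)|^2\,d\xi\,d\tau\le \|\langle\sigma\rangle^{b}\widehat\theta\|_{L^2_\sigma}^2\,\|\widehat u_{k,0}\|_{L^2_\xi}^2 .
\end{equation*}
The first factor is finite because $\widehat\theta$ is Schwartz, and it depends only on $\theta$.

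The second region, $|\omega_k(\xi)|<1$, is the only point needing care, because the weight there is $\langle\tau\rangle^{2\alpha}$ rather than $\langle\tau-\omega_k\rangle^{2b}$. Since $|\omega_k(\xi)|<1$, we have $\langle\tau\rangle\le\langle\tau-\omega_k(\xi)\rangle+|\omega_k(\xi)|\le 2\langle\tau-\omega_k(\xi)\rangle$. Hence
\begin{equation*}
\iint_{|\omega_k|<1}\langle\tau\rangle^{2\alpha}|\widehat\theta(\tau-\omega_k)|^2|\widehat u_{k,0}|^2\le 2^{2\alpha}\|\langle\sigma\rangle^{\alpha}\widehat\theta\|_{L^2_\sigma}^2\|\widehat u_{k,0}\|_{L^2_\xi}^2 .
\end{equation*}
This is again uniform in $k$. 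The only subtlety is that the set $\{|\omega_k|<1\}$ moves with $k$, which includes the cases $\lambda_k<a$ where $\omega_k$ has three real zeros. The bound above uses only the pointwise inequality $|\omega_k|<1$, so this causes no difficulty.

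Summing the two regions over $k$ and applying Parseval yields
\begin{equation*}
\|\theta S(t)u_0\|_{X^{0,b}}\le C_\theta\|u_0\|_{L^2},\qquad C_\theta=\|\langle\sigma\rangle^{b}\widehat\theta\|_{L^2}+2^{\alpha}\|\langle\sigma\rangle^{\alpha}\widehat\theta\|_{L^2}.
\end{equation*}
Passing to the restriction norm $X^{0,b}_T$ only decreases the left side, which proves the lemma. The dependence on $T<1$ enters only through $\theta$. If one wants $\theta(t)=\theta_1(t/T)$, then $\|\langle\sigma\rangle^{\beta}\widehat\theta\|_{L^2}\lesssim T^{1/2-\beta}$ for $\beta>1/2$, so the constant may blow up as $T\to0$ in the $\alpha$ term. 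This is harmless, because the statement allows $C_\theta$ to depend on $\theta$.
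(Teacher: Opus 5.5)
Your proof is correct and follows essentially the same route as the paper's: a modewise reduction, the formula $\widehat{u}_k(\xi,\tau)=\widehat{\theta}(\tau-\omega_k(\xi))\widehat{u}_{0,k}(\xi)$, the change of variables $\sigma=\tau-\omega_k(\xi)$ on $\{|\omega_k|\ge1\}$, the bound $\langle\tau\rangle\lesssim\langle\tau-\omega_k(\xi)\rangle$ on $\{|\omega_k|<1\}$, and Parseval in $y$. Your version is in fact slightly more accurate than the paper's, which writes the low-frequency weight as $\langle\tau\rangle^{2b}$ over $\{|\xi|\le1\}$ instead of $\langle\tau\rangle^{2\alpha}$ over $\{|\omega_k(\xi)|<1\}$ as the definition of $X^{0,b}$ requires, and you also make explicit the extension of $u_0$ off $\Omega$.
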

\begin{proof}
Projecting the initial data onto the transverse basis $\{e_k\}_{k \ge 1}$, the linear evolution for each $k$ is given by $u_k(x,t) = \theta(t)S_k(t)u_{0,k}(x)$. Thus, we have that $\widehat{u}_k(\xi, \tau) = \widehat{\theta}(\tau - \omega_k(\xi)) \widehat{u}_{0,k}(\xi)$.

Evaluating the norm in $X_{T}^{0,b}$ and performing the change of variables $\sigma = \tau - \omega_k(\xi)$, the contribution of the high-frequency regime ($|\omega_{k}(\xi)| > 1$) is bounded as
\begin{equation}
\sum_{k=1}^{\infty} \underset{|\omega_{k}(\xi)| > 1}{\iint} \langle \tau - \omega_k(\xi) \rangle^{2b} |\widehat{\theta}(\sigma)|^2 |\widehat{u}_{0,k}(\xi)|^2 d\sigma d\xi \le \|\theta\|_{H^b_t}^2 \sum_{k=1}^{\infty} \|u_{0,k}\|_{L^2_x}^2.
\end{equation}

For the low-frequency region ($|\omega_{k}(\xi)| \le 1$), we use the triangle inequality given by $\langle \tau \rangle^{2b} \leq C\left(\langle \tau - \omega_k(\xi) \rangle^{2b} + \langle \omega_k(\xi) \rangle^{2b}\right)$. Since the dispersion relation $\omega_k(\xi)$ is uniformly bounded in the region $|\omega_{k}(\xi)| \le 1$, the spatial weight is absorbed (by the regularity of $\theta$) to hold
\begin{equation}
\sum_{k=1}^{\infty} \underset{|\xi| \le 1}{\iint} \langle \tau \rangle^{2b} |\widehat{u}_k(\xi,\tau)|^2 d\tau d\xi \le C(\theta) \sum_{k=1}^{\infty} \|u_{0,k}\|_{L^2_x}^2.
\end{equation}

The proof is concluded by adding both estimates and applying Parseval's identity in $y$, which guarantees that $\sum_{k=1}^{\infty} \|u_{0,k}\|_{L^2_x}^2 = \|u_0\|_{L^2_{x,y}}^2$.
\end{proof}

\begin{lemma}\label{lema5.3}
Let $f\in H^{-1/3}_{t}L^{2}_{y}$ and let $\theta$ be a $C_{0}^{\infty}(\mathbb{R})$ cutoff function. We have \begin{equation}\label{lemma5.3}
\left\|\theta(t)\int_{0}^{t} S(t-t')\delta_{0}(x)f(y,t')dt'\right\|_{X^{0,b}}\leq C\|f\|_{H^{-1/3}_{t}L^{2}_{y}}.\end{equation}
\end{lemma}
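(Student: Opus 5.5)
We follow the scheme of \cite{CK} (Lemma 5.5 there), carried out mode by mode in the transverse variable. First project onto the basis $\{e_k\}$. Writing $f=\sum_k f_k(t)e_k(y)$, the $k$-th mode of the Duhamel boundary term is
\[
w_k(x,t)=\theta(t)\int_0^t S_k(t-t')\delta_0(x)f_k(t')\,dt',
\]
so by Parseval in $y$ it suffices to prove the one-dimensional estimate
\[
\|w_k\|_{X_k^{0,b}}\le C\|f_k\|_{H^{-1/3}_t},
\]
where $X^{0,b}_k$ is the modified KdV space with phase $\omega_k$ appearing inside the $k$-sum. The constant $C$ must be independent of $k$. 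Summing over $k$ then gives \eqref{lemma5.3}.

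For fixed $k$ we compute the space--time Fourier transform. Since $\widehat{\delta_0}(\xi)=1$, we have
\[
\int_0^t S_k(t-t')\delta_0 f_k\,dt' = \int e^{ix\xi}\int_0^t e^{i(t-t')\omega_k(\xi)}f_k(t')\,dt'\,d\xi .
\]
Writing $f_k$ through its time Fourier transform, the inner integral equals
\[
\int \widehat{f_k}(\tau')\,\frac{e^{it\tau'}-e^{it\omega_k(\xi)}}{i(\tau'-\omega_k(\xi))}\,d\tau'.
\]
We split this with a smooth cutoff $\psi(\tau'-\omega_k(\xi))$, exactly as in the standard Duhamel estimate of Kenig--Ponce--Vega/Colliander--Kenig. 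This yields three pieces:
\begin{itemize}
\item[(i)] a near-resonant piece, expanded as a Taylor series $\sum_j \frac{(it)^j}{j!}(\tau'-\omega_k)^{j-1}$, which produces terms $\theta(t)t^j e^{it\omega_k(\xi)}h_j(\xi)$;
\item[(ii)] a free-wave piece $\theta(t)e^{it\omega_k(\xi)}h(\xi)$, with $h(\xi)=\int(1-\psi)\widehat{f_k}(\tau')/(\tau'-\omega_k)\,d\tau'$;
\item[(iii)] a genuinely inhomogeneous piece, whose transform is $\widehat\theta * \big[(1-\psi)\widehat{f_k}(\tau)/(\tau-\omega_k(\xi))\big]$.
\end{itemize}
Pieces (i) and (ii) are handled like the previous lemma. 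Their $X^{0,b}$ norms, in both the $|\omega_k|\ge1$ and the $|\omega_k|<1$ regions, are bounded by $C_\theta\|h\|_{L^2_\xi}$.

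The key point is that $\widehat{f_k}$ does not depend on $\xi$: $\delta_0$ makes the forcing constant in $\xi$. So we need the trace-type bound
\[
\int\Big|\int \frac{(1-\psi)(\tau'-\omega_k(\xi))}{\tau'-\omega_k(\xi)}\widehat{f_k}(\tau')\,d\tau'\Big|^2 d\xi \lesssim \|f_k\|^2_{H^{-1/3}}.
\]
We change variables $\mu=\omega_k(\xi)$ on each monotone branch of $\omega_k$, with $d\xi=d\mu/|\omega_k'(\xi)|$ and $|\omega_k'|=|3\xi^2+\lambda_k-a|$. Since $|\omega_k'|\gtrsim \langle\mu\rangle^{2/3}$ away from the critical points, the $\xi$-integral becomes an integral of $|Hf_k(\mu)|^2\langle\mu\rangle^{-2/3}$, where $H$ is a Hilbert-transform-type operator. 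This operator is bounded on the weighted space $\langle\mu\rangle^{-2/3}d\mu$ because that weight is an $A_2$ weight. The result is $\lesssim\|f_k\|_{H^{-1/3}}^2$.

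Piece (iii) is treated the same way. There $\langle\tau-\omega_k\rangle^{b-1}$ is square integrable in $\tau$ (since $b<1/2$), and the $\xi$-integral is again converted into a $\mu$-integral with Jacobian $\langle\mu\rangle^{-2/3}$. This gives the bound
\[
\int|\widehat{f_k}(\tau)|^2\int \langle\tau-\mu\rangle^{2b-2}\langle\mu\rangle^{-2/3}\,d\mu\,d\tau\lesssim\|f_k\|^2_{H^{-1/3}}.
\]
In the low region $|\omega_k|<1$ one uses instead the weight $\langle\tau\rangle^{2\alpha}$ together with $\alpha<2/3$, so that $\langle\tau\rangle^{2\alpha-2}\lesssim\langle\tau\rangle^{-2/3}$.

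The main obstacle is the uniformity in $k$ of the change of variables $\mu=\omega_k(\xi)$. When $\lambda_k>a$, $\omega_k$ is monotone with $|\omega_k'|\ge\lambda_k-a$. For the finitely many $k$ with $\lambda_k\le a$, however, $\omega_k$ has critical points where the Jacobian degenerates. We expect the modified low-frequency part of the norm (the $\langle\tau\rangle^{\alpha}$ weight on $|\omega_k|<1$) to absorb neighbourhoods of the bounded critical values. Also, for $k$ large we need $|\omega_k'|\gtrsim\langle\mu\rangle^{2/3}$ with constants independent of $k$; this follows from
\[
3\xi^2+\lambda_k\gtrsim \max\big(\lambda_k,\ |\mu|^{2/3}\big).
\]
The finitely many exceptional modes contribute only a $k$-independent constant depending on $a$ and $B$.
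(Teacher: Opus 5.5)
Your outer reduction is the paper's: project onto the sine modes, prove a one-dimensional bound $\|w_k\|_{X^{0,b}_k}\le C\|f_k\|_{H^{-1/3}_t}$ with $C$ independent of $k$, and sum by Parseval. The difference is how that one-dimensional bound is obtained.

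The paper cites the Colliander--Kenig lemma and asserts that its constant does not depend on $k$. You instead re-prove the estimate for the phase $\omega_k(\xi)=\xi^3+(\lambda_k-a)\xi$. This gains something real. The Colliander--Kenig estimate is for the phase $\xi^3$ with low-frequency set $\{|\xi|\le 1\}$. The transport term $(\lambda_k-a)\partial_x$ cannot be removed by a Galilean shift without moving the point source $\delta_0(x)$. So the paper's citation alone does not justify $k$-independence.

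Two facts you use supply exactly that uniformity. First, $|\omega_k'(\xi)|\gtrsim\langle\omega_k(\xi)\rangle^{2/3}$ uniformly for all but finitely many $k$, which you get from $3\xi^2+\lambda_k\gtrsim\max(\lambda_k,|\mu|^{2/3})$. Second, $\{|\omega_k|<1\}\subset\{|\xi|<1\}$ whenever $\lambda_k\ge a$. The rest of your estimates check out:
\begin{itemize}
\item the smoothly truncated Hilbert kernel is Calder\'on--Zygmund and $\langle\mu\rangle^{-2/3}\in A_2$;
\item $\int\langle\tau-\mu\rangle^{2b-2}\langle\mu\rangle^{-2/3}\,d\mu\lesssim\langle\tau\rangle^{-2/3}$ holds precisely because $b<1/2$;
\item $\alpha<2/3$ handles $\{|\omega_k|<1\}$.
\end{itemize}

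The step whose justification fails is your treatment of the exceptional modes. For $\lambda_k<a$ the critical values of $\omega_k$ are $\mp\frac{2}{3\sqrt{3}}(a-\lambda_k)^{3/2}$. These have modulus at least $1$ once $a-\lambda_k\ge(3\sqrt{3}/2)^{2/3}\approx 1.9$. The critical points then lie in $\{|\omega_k|\ge 1\}$, where the weight is $\langle\tau-\omega_k\rangle^{b}$, so the $\langle\tau\rangle^{\alpha}$ part of the norm absorbs nothing.

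The repair is easy: do not change variables on a compact $\xi$-set. For each of the finitely many remaining $k$, choose $R$ so that $|\omega_k'|\sim\xi^2\sim\langle\omega_k\rangle^{2/3}$ on $|\xi|>R$. On $|\xi|\le R$, Cauchy--Schwarz gives $|h(\xi)|+|h_j(\xi)|\lesssim C^j\langle\omega_k(\xi)\rangle^{1/3}\|f_k\|_{H^{-1/3}}$. This works because the kernels $(1-\psi)(s)/s$ and $\psi(s)s^{j-1}$ are bounded with $\langle s\rangle^{-1}$ decay; the singularity of $1/s$ has been cut off. For piece (iii), $\int_{|\xi|\le R}\langle\tau-\omega_k(\xi)\rangle^{2b-2}\,d\xi\lesssim_R\langle\tau\rangle^{2b-2}\le\langle\tau\rangle^{-2/3}$.

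The same local argument is also needed for modes with $\lambda_k-a\ge 0$ small. For $\lambda_k=a$, for example, the Jacobian is $|\mu|^{-2/3}$ rather than $\langle\mu\rangle^{-2/3}$. With this change your proof is complete, with a constant depending only on $a,B,b,\alpha,\theta$.
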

\begin{proof}
Denote by $f_{k}$ the projection of a function $f$ onto the transverse basis $\{e_{k}\}_{k \ge 1}$, where $k$ represents the transverse index. We observe that $\delta_{0}(x)$ and $\theta(t)$ are independent of the transverse variable, thus commuting with the projection operator. Furthermore, for each fixed $k$, $S(t)$ acts on the $x$-variable in a manner equivalent to the one-dimensional case, denoted by $S_{k}(t)$, associated with the symbol of the parameterized dispersion relation. Projecting the expression from the left-hand side of \eqref{lemma5.3} onto the transverse basis, we obtain, for each $k$
\begin{equation}
u_{k}(x,t) = \theta(t)\int_{0}^{t} S_{k}(t-t')\delta_{0}(x)f_{k}(t')dt'.\end{equation}
For any fixed $k$, the function $t' \mapsto f_{k}(t')$ lies in the one-dimensional space $H^{-1/3}_{t}$. Thus, the representation reduces the multivariable operator to a family of one-dimensional integral equations in the variables $x$ and $t$. Due to Lemma 5.3 from \cite{CK}, there exists a constant $C$ (independent of $k$) such that:
\begin{equation}\label{1Dlema5.3}
\|u_{k}\|_{X^{0,b}_{x,t}}^{2} \leq C^{2} \|f_{k}\|_{H^{-1/3}_{t}}^{2}.\end{equation}
By definition, and by Parseval's identity with respect to the transverse basis, it holds
\begin{equation}
\|u\|_{X^{0,b}_{x,y,t}}^{2} = \sum_{k=1}^{\infty} \|u_{k}\|_{X^{0,b}_{x,t}}^{2},\end{equation}
which gives
\begin{equation}\label{1Dlema5.3a}
\|u\|_{X^{0,b}_{x,y,t}}^{2} \leq C^{2} \sum_{k=1}^{\infty} \|f_{k}\|_{H^{-1/3}_{t}}^{2}.\end{equation}
Applying Parseval's identity again to the right-hand side of \eqref{1Dlema5.3a}, the result reads
\begin{equation}\sum_{k=1}^{\infty} \|f_{k}\|_{H^{-1/3}_{t}}^{2} = \|f\|_{H^{-1/3}_{t}L^{2}_{y}}^{2}.\end{equation}
Therefore, $\|u\|_{X^{0,b}_{x,y,t}}^{2} \leq C^{2} \|f\|_{H^{-1/3}_{t}L^{2}_{y}}^{2}$, which yields \eqref{lemma5.3}.
\end{proof}

We now put our attention to the forced Duhamel term $\int_0^t S(t-t')w(x,y,t')dt'$. The next two lemmas describe its mapping properties. We first show that the inclusion of a smooth temporal cutoff $\theta$ yields a bounded map from $Y^{0,b}$ to $X^{0,b}$. Then, given $w \in Y^{0,b}$, we evaluate the boundary trace of this localized response, showing it to be in $H^{1/3}_t L^2_y$.

\begin{lemma}
For $w\in Y^{0,b}$ and $\theta \in C_{0}^{\infty}$, we have
\begin{equation}
\left\|\theta(t)\int_{0}^{t} S(t-t')w(x,y,t')dt'\right\|_{X^{0,b}}\leq C\|w\|_{Y^{0,b}}.
\end{equation}
\end{lemma}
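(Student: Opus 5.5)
We plan to follow the same mode-by-mode reduction used in the proof of Lemma \ref{lema5.3}. Project $w$ onto the transverse basis, $w=\sum_k w_k(x,t)e_k(y)$. Since $\theta(t)$ and the time integral do not act on $y$, and $S(t)$ acts on the $k$-th mode as $S_k(t)$ with symbol $e^{it\omega_k(\xi)}$, the $k$-th mode of the left-hand side is
\[
u_k(x,t)=\theta(t)\int_0^t S_k(t-t')w_k(\cdot,t')\,dt' .
\]
Both $\|\cdot\|_{X^{0,b}}^2$ and $\|\cdot\|_{Y^{0,b}}^2$ are by definition sums over $k$ of one-dimensional norms in $(x,t)$, built from $\omega_k$. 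It therefore suffices to prove the one-dimensional estimate $\|u_k\|_{X^{0,b}_{k}}\le C\|w_k\|_{Y^{0,b}_{k}}$ with $C$ \emph{independent of $k$}, and then sum in $k$.

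For the one-dimensional estimate we would invoke the corresponding Duhamel lemma of \cite{CK}, which is the analogue of Lemma 5.3 already used above. It is stated there for the Airy symbol $\xi^3$, or $\xi^3-\beta\xi$. To make uniformity in $k$ transparent, we would rederive it on the Fourier side. Write $\widehat{w}_k(\xi,\tau)$, and express the Duhamel integral as
\[
\widehat{u}_k(\xi,\tau)=\int \widehat{w}_k(\xi,\tau')\,\frac{\widehat{\theta}(\tau-\tau')-\widehat{\theta}(\tau-\omega_k(\xi))}{i(\tau'-\omega_k(\xi))}\,d\tau' .
\]
Then split into two regions.

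\textbf{Region $|\tau'-\omega_k(\xi)|\gtrsim1$.} The two terms are treated separately. The first gives the standard $X^{s,b}$ inhomogeneous bound: the kernel $\langle\tau-\omega_k\rangle^{b}\langle\tau'-\omega_k\rangle^{b-1}|\widehat\theta(\tau-\tau')|$ is handled by Schur's test, using $b<1/2$ and the rapid decay of $\widehat\theta$. The second term is $\widehat\theta(\tau-\omega_k(\xi))$ times $\int|\widehat w_k|\langle\tau'-\omega_k\rangle^{-1}d\tau'$. Its $X^{0,b}$ norm is bounded by $\|\theta\|_{H^b}$ times exactly the third (``$L^2_\xi L^1_\tau$'') component of the $Y^{0,b}$ norm. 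This is why that component is included in the definition.

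\textbf{Region $|\tau'-\omega_k|\lesssim1$.} Here we Taylor-expand $e^{i(t-t')\omega_k}$ as in the standard argument, so that the difference quotient is smooth and rapidly decaying. This gives a bound by $\|w_k\|$ in the $\langle\tau-\omega_k\rangle^{-b}$ weighted $L^2$ norm.

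Only the high-dispersion part $|\omega_k(\xi)|\ge1$ needs this care. On the set $|\omega_k(\xi)|<1$ we have $\langle\tau\rangle\sim\langle\tau-\omega_k\rangle$. We would then bound the $\langle\tau\rangle^{\alpha}$ weight using the $\langle\tau\rangle^{-(1-\alpha)}$ weight on $w$: the multiplier $(\widehat\theta(\tau-\tau')-\widehat\theta(\tau))/\tau'$ maps $\langle\tau'\rangle^{\alpha-1}L^2$ to $\langle\tau\rangle^{-\alpha}L^2$, i.e.\ it gains one derivative in time. This is the one-dimensional Sobolev estimate $\|\theta\int_0^t h\|_{H^\alpha}\lesssim\|h\|_{H^{\alpha-1}}$, which holds for $1/2<\alpha<3/2$.

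The key point, and the expected main obstacle, is uniformity in $k$. Every bound above depends on $\xi$ and $k$ only through the number $\omega_k(\xi)$ and through the partition $|\omega_k(\xi)|\gtrless 1$. The constants come solely from $\theta$, $b$ and $\alpha$, and after the changes of variable $\sigma=\tau-\omega_k(\xi)$ the $\xi$-integral is a passive parameter. So the constant is independent of $k$, even though $\omega_k$ has linear coefficient $\lambda_k-a\to\infty$ and changes sign for the finitely many $k$ with $\lambda_k<a$. The only delicate check is that no estimate uses the specific shape $\xi^3$. The one place this could fail is the boundary between the two frequency regions, where the $X^{0,b}$ weight switches from $\langle\tau-\omega_k\rangle^{b}$ to $\langle\tau\rangle^{\alpha}$. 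There we would use $|\omega_k|\le1$ to compare $\langle\tau\rangle$ with $\langle\tau-\omega_k\rangle$ up to an absolute factor.

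Once the one-dimensional estimate holds with a $k$-independent constant, summing over $k$ and applying Parseval in $y$ yields the lemma.
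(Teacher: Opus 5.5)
Your proposal is correct and follows the paper's route: project onto the sine modes $e_k$, apply the one-dimensional Duhamel estimate of \cite{CK} to each $u_k$ with a $k$-independent constant, and sum in $k$ by Parseval. The paper only asserts that the constant is uniform in $k$. Your fiberwise check supplies that uniformity, and also the adaptation to the paper's modified split $|\omega_k(\xi)|\gtrless 1$: on $|\omega_k(\xi)|\ge1$ you shift $\sigma=\tau-\omega_k(\xi)$, and on $|\omega_k(\xi)|<1$ you use $\langle\tau\rangle\sim\langle\tau-\omega_k(\xi)\rangle$, so both cases reduce to $\omega=0$ estimates in $t$.
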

\begin{proof}
Recall that the function $\theta(t)$ is independent of $x$ and the transverse variable, and $S(t)$ acts as the multiplier $e^{-it(\xi^{3}+(\lambda_{k}-a)\xi)}$; the projection onto the transverse basis commutes with the Duhamel operator. Thus, projecting the function
\begin{equation}u(x,y,t) = \theta(t)\int_{0}^{t} S(t-t')w(x,y,t')dt'\end{equation} onto the transverse basis, we obtain a decoupled family of equations for each $k$:
\begin{equation}u_{k}(x,t) = \theta(t)\int_{0}^{t} S_{k}(t-t')w_{k}(x,t')dt',\end{equation} where $S_{k}(t)$ is constructed as in the proof of Lemma \ref{lema5.3}. For a fixed $k$, the above identity corresponds to Lemma 5.4 from \cite{CK} for the one-dimensional case; that is, there exists a constant $C > 0$, independent of $k$, such that:
\begin{equation}\label{1Dlema5.4}
\|u_{k}\|_{X^{0,b}_{x,t}}^{2} \leq C^{2} \|w_{k}\|_{Y^{0,b}_{x,t}}^{2},\end{equation}
where the norms $X^{0,b}_{x,t}$ and $Y^{0,b}_{x,t}$ represent the one-dimensional analogues restricted to the transverse frequency $k$. Summing \eqref{1Dlema5.4} over $k$, we obtain:
\begin{equation}\sum_{k=1}^{\infty} \|u_{k}\|_{X^{0,b}_{x,t}}^{2} \leq C^{2} \sum_{k=1}^{\infty} \|w_{k}\|_{Y^{0,b}_{x,t}}^{2}.\end{equation}
By the definition of the Bourgain space norms $X^{0,b}$ and $Y^{0,b}$, and by Parseval's identity with respect to the transverse basis, this means that
\begin{equation}
\|u\|_{X^{0,b}}^{2} \leq C^{2} \|w\|_{Y^{0,b}}^{2},\end{equation} and the result follows.
\end{proof}

\begin{lemma}
For $h\in Y^{0,b}$ and $\theta \in C_{0}^{\infty}$, it holds
\begin{equation}
\left\|\theta(t)\int_{0}^{t} S(t-t')h(x,y,t')dt'\Big|_{x=0}\right\|_{H^{1/3}_{t}L^{2}_{y}}\leq C\|h\|_{Y^{0,b}}.
\end{equation}
\end{lemma}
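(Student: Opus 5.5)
The plan is to reduce this trace estimate to the corresponding one-dimensional result of \cite{CK}, exactly as in the two previous lemmas. We project onto the transverse basis $\{e_k\}_{k\ge1}$. Since $\theta(t)$ does not depend on $y$, the trace operator $\cdot|_{x=0}$ acts only in $x$, and $S(t)$ acts on the $k$-th mode as the one-dimensional propagator $S_k(t)$. Hence the $k$-th transverse coefficient of the left-hand side is
\begin{equation*}
v_k(t)=\theta(t)\int_0^t S_k(t-t')h_k(\cdot,t')\,dt'\Big|_{x=0}.
\end{equation*}
By Parseval in $y$, applied inside the temporal Fourier transform, we have $\|v\|_{H^{1/3}_tL^2_y}^2=\sum_k\|v_k\|_{H^{1/3}_t}^2$. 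Also, by the definition of $Y^{0,b}$, we have $\|h\|_{Y^{0,b}}^2=\sum_k\|h_k\|_{Y^{0,b}_{x,t}}^2$, where $Y^{0,b}_{x,t}$ is the one-dimensional space built on $\omega_k$. So it suffices to prove the estimate
\begin{equation*}
\|v_k\|_{H^{1/3}_t}\le C\|h_k\|_{Y^{0,b}_{x,t}}
\end{equation*}
with $C$ independent of $k$, and then sum over $k$.

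For fixed $k$ this is the Duhamel trace lemma of \cite{CK} (Lemma 5.5 there) for the linear KdV equation $\partial_t+\partial_x^3+(\lambda_k-a)\partial_x$. To check uniformity in $k$, I would rerun its proof with an explicit formula. Writing $\widehat{h}_k(\xi,\tau)$ for the space-time transform, the untruncated Duhamel integral has the representation
\begin{equation*}
\int\!\!\int e^{ix\xi}\,\frac{e^{it\tau}-e^{it\omega_k(\xi)}}{i(\tau-\omega_k(\xi))}\,\widehat{h}_k(\xi,\tau)\,d\xi\,d\tau .
\end{equation*}
Setting $x=0$ and multiplying by $\theta$, I would split into the near-resonant part $|\tau-\omega_k(\xi)|\le1$, which is handled by Taylor expansion in $t$, and the nonresonant part. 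The nonresonant part splits further into the term with $e^{it\tau}$ and the term with $e^{it\omega_k(\xi)}$.
\begin{itemize}
\item For the $e^{it\tau}$ term, one needs $\big\|\int \widehat{h}_k(\xi,\tau)\langle\tau-\omega_k\rangle^{-1}d\xi\big\|_{H^{1/3}_\tau}$. This follows from Cauchy--Schwarz in $\xi$ combined with the bound $\int\langle\tau-\omega_k(\xi)\rangle^{-2(1-b)}d\xi\lesssim\langle\tau\rangle^{-2/3}$.
\item For the $e^{it\omega_k(\xi)}$ term, one changes variables $\tau'=\omega_k(\xi)$ on each monotone branch, using the Jacobian $|\omega_k'(\xi)|^{-1}$. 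This reduces it to the last (Kato-type) component $\int\big(\int|\widehat h_k|\langle\tau-\omega_k\rangle^{-1}d\tau\big)^2d\xi$ in the $Y^{0,b}$ norm.
\item The low-frequency region $|\omega_k|<1$ is controlled by the $\langle\tau\rangle^{-2(1-\alpha)}$ part of the norm, using $\alpha>1/2$.
\end{itemize}

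The main obstacle is uniformity in $k$, because the shift $\lambda_k-a$ changes the geometry of $\omega_k$. The key pointwise ingredient is
\begin{equation*}
|\omega_k'(\xi)|=|3\xi^2+\lambda_k-a|\gtrsim\langle\tau'\rangle^{2/3}\quad\text{on the curve } \tau'=\omega_k(\xi),\ |\omega_k(\xi)|\ge1 .
\end{equation*}
For $\lambda_k>a$ (all but finitely many $k$), $\omega_k$ is monotone and $\omega_k'\ge3\xi^2+(\lambda_k-a)\gtrsim|\omega_k|^{2/3}$, which is exactly what converts $\langle\tau'\rangle^{2/3}$ into the Jacobian; this works uniformly in $k$. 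For the finitely many $k$ with $\lambda_k\le a$, the critical points of $\omega_k$ lie in a bounded set, where $|\omega_k|$ is bounded. There the cutoff $|\omega_k|\ge1$, together with the fact that only finitely many such $k$ occur, gives a $k$-dependent but finite constant. Taking the maximum over these $k$ and the uniform constant for the rest yields a single $C$, and summing over $k$ concludes the proof.
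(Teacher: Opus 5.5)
Your proposal is correct and takes the same route as the paper. Both arguments project onto the modes $e_k$, apply the one-dimensional Duhamel trace lemma of \cite{CK} to each $k$ with a $k$-independent constant, and sum using Parseval in $y$. The paper only asserts that uniformity in $k$; your sketch actually justifies it through $|\omega_k'|\gtrsim|\omega_k|^{2/3}$ on $|\omega_k|\ge 1$ when $\lambda_k>a$.

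One step needs tightening. For the finitely many $k$ with $\lambda_k<a$, the critical values of $\omega_k$ are $\pm\frac{2}{3\sqrt3}(a-\lambda_k)^{3/2}$, so the restriction $|\omega_k|\ge1$ need not exclude the critical points. That bounded frequency region should instead be handled directly with the time cutoff $\theta$, which still gives a finite $k$-dependent constant.
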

\begin{proof}
Let $v(y,t)$ be the restriction of the Duhamel term at $x=0$. Since $\theta(t)$ and the trace operator commute with the projection onto the transverse basis, we decompose the trace to be
\begin{equation}
v_{k}(t) = \theta(t)\int_{0}^{t} S_{k}(t-t')h_{k}(x,t')dt'\Big|_{x=0},\end{equation}
where $S_{k}(t)$ represents the one-dimensional linear operator with the dispersion relation $\omega_{k}(\xi)$.

At this point, we apply Lemma 5.5 from \cite{CK} for the boundary estimate of the one-dimensional non-homogeneous term; that is, there exists a constant $C > 0$, independent of $k$, such that
\begin{equation}\label{1Dlema5.5}
\|v_{k}\|_{H^{1/3}_{t}}^{2} \leq C^{2} \|h_{k}\|_{Y^{0,b}_{x,t}}^{2}.\end{equation}
Finally, summing \eqref{1Dlema5.5} over $k$ and Parseval's identity with respect to the transverse basis, imply
\begin{equation}
\|v\|_{H^{1/3}_{t}L^{2}_{y}}^{2} \leq C^{2} \|h\|_{Y^{0,b}}^{2},
\end{equation}
which completes the proof.
\end{proof}

\section{Nonlinear estimate}

In this subsection, we prove an estimate required to handle the
nonlinearity of the ZK equation. This is the main step to set the
fixed-point argument and conclude the well-posedness result.

\begin{lemma}
For $v \in Z^{b} = X^{0,b} \cap L^{2}_{t} H^{1}_{x,y}$, we have, for suitable $3/8<b<1/2$ and $1/2<\alpha<2/3$, the bilinear estimate
\begin{equation}\label{bilinear}
\|\partial_{x}(v^{2})\|_{Y^{0,b}}\leq C\|v\|_{Z^{b}}^{2}.
\end{equation}
\end{lemma}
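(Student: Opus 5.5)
We bound the three pieces of the $Y^{0,b}$ norm of $\partial_x(v^2)$ separately. After projecting onto the transverse basis, the $k$-th mode is $\partial_x (v^2)_k$. Here
\[
(v^2)_k=\sum_{k_1,k_2}c_{k,k_1,k_2}\,v_{k_1}v_{k_2},
\]
and the coefficients $c_{k,k_1,k_2}=\langle e_{k_1}e_{k_2},e_k\rangle$ are nonzero only when $k=\pm k_1\pm k_2$. Passing to the odd extension in $y$ turns this into a convolution on $\mathbb{Z}$. It therefore suffices to estimate the 2D bilinear expression
\[
\widehat{\partial_x(v^2)}(\xi,k,\tau)=i\xi\sum_{k_1}\int \widehat v(\xi_1,k_1,\tau_1)\,\widehat v(\xi-\xi_1,k-k_1,\tau-\tau_1)\,d\xi_1 d\tau_1
\]
against the weights $\langle\tau-\omega_k(\xi)\rangle^{-b}$ (high region), $\langle\tau\rangle^{-(1-\alpha)}$ (low region), and the $L^2_\xi L^1_\tau$ weight $\langle\tau-\omega_k\rangle^{-1}$.

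The key structural input is the resonance identity for the ZK symbol, $\omega(\xi,\eta)=\xi^3+\xi\eta^2$ up to the linear term $-a\xi$, which cancels:
\[
\omega(\xi,\eta)-\omega(\xi_1,\eta_1)-\omega(\xi_2,\eta_2)=3\xi\xi_1\xi_2+\xi\eta^2-\xi_1\eta_1^2-\xi_2\eta_2^2 .
\]
This expression vanishes on a large set. For example, when $\xi_1,\xi_2$ are small the dispersion gives essentially no gain. So we will not rely purely on modulation. The plan is to split into two cases.

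\textbf{Case 1: the largest modulation is large}, namely $\max(\langle\sigma\rangle,\langle\sigma_1\rangle,\langle\sigma_2\rangle)\gtrsim |\xi|\,\langle\text{frequencies}\rangle^{2}$. In this case we proceed as in the KdV argument of \cite{CK}. We dualize, bound the kernel $|\xi|\langle\sigma\rangle^{-b}\langle\sigma_1\rangle^{-b}\langle\sigma_2\rangle^{-b}$ by Cauchy--Schwarz, and reduce to a $\sup_{\xi,k,\tau}$ of an integral of $\langle\ldots\rangle^{-2b}$ over the hypersurface. This integral is finite provided $b>3/8$, which is where the restriction $3/8<b<1/2$ should come from.

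\textbf{Case 2: all modulations are small relative to the derivative.} Here we give up on dispersion and use the $L^2_tH^1$ component of $Z^b$. We place the derivative on one factor, writing $\partial_x(v^2)=2v\,v_x$, with $v_x\in L^2_{t,x,y}$. For the other factor we use an $L^\infty_tL^2$ or Strichartz-type bound coming from $X^{0,b}$ together with the 2D Sobolev embedding $H^1\subset L^p$ for all $p<\infty$. This gives $v\,v_x\in L^2_tL^{q}_{x,y}$ with $q<2$ close to $2$. By duality, $L^{q}\subset H^{-\epsilon}$ in 2D, so we gain the $\langle\tau-\omega\rangle^{-b}$ weight with $b<1/2$: because $b<1/2$, the dual space $X^{0,-b}$ contains $L^2_tL^2$-type functions, and the small loss $\epsilon$ is absorbed by the modulation restriction of this case.

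The low-frequency region $|\omega_k(\xi)|<1$ is simpler. There $|\xi|\lesssim 1$ (and $\lambda_k$ is bounded, or else $\xi\sim(\lambda_k)^{-1}$ is small), so the derivative $i\xi$ is harmless. The weight $\langle\tau\rangle^{-(1-\alpha)}$ with $1-\alpha>1/3$ is controlled by the $L^2_{t}$ norm of $v^2$ in $L^1_x$, which we estimate by $\|v\|_{L^2_tH^1}\|v\|_{L^\infty_tL^2}$ via Sobolev embedding. The third, $L^2_\xi L^1_\tau$, piece is handled by writing $\langle\sigma\rangle^{-1}=\langle\sigma\rangle^{-1/2-\epsilon}\langle\sigma\rangle^{-1/2+\epsilon}$ and using Cauchy--Schwarz in $\tau$. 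This reduces it to the first piece with $b$ replaced by $1/2-\epsilon$, which is the same estimate with slack.

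The main obstacle is Case 2, or more precisely the interface between the two cases. The set where the resonance function is small is genuinely two-dimensional, and there the $X^{0,b}$ norm with $b<1/2$ gives neither $L^\infty_t$ control nor a Strichartz gain. We expect to have to rely entirely on $L^2_tH^1$ and the Sobolev embedding, and the delicate point is to check that the loss of $|\xi|$ is fully compensated by the $H^1$ factor without requiring $L^\infty_tL^2$ of the other factor. If that control fails, the fallback is to estimate $v$ in $L^4_{t,x,y}$ by interpolating between $X^{0,b}$ (with $b>3/8$) and $L^2_tH^1$.
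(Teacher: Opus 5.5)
There is a genuine gap, and it is essentially the one you flag at the end.

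\textbf{Case 2 and the low-frequency bound.} As written, both put $vv_x$ into an $L^2_t$-based space by using $v\in L^\infty_tL^2$ or $v\in L^\infty_tL^p_{x,y}$. The space $Z^b$ provides neither: $X^{0,b}$ with $b<1/2$ does not embed in $L^\infty_tL^2$, and $L^2_tH^1$ only gives $L^2_tL^p$. With $L^2_t$ control on both factors, Hölder yields at best $vv_x\in L^1_tL^q_{x,y}$. Pairing that with the dual test function would require the test function in $L^\infty_t$, i.e.\ the weight $\langle\tau-\omega_k(\xi)\rangle^{-b}$ with $b>1/2$.

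Your absorption of the $H^{-\epsilon}$ loss also goes the wrong way. In Case 2 the modulation is only bounded from above, so $\langle\tau-\omega_k(\xi)\rangle^{-b}$ may be of size $1$ and gives no decay in $(\xi,k)$.

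\textbf{Case 1.} This case is only asserted: the sup-integral is never computed, and the threshold $b>3/8$ is not derived from it. It also relies on a false structural claim. With the Dirichlet sine basis, $\langle e_{k_1}e_{k_2},e_k\rangle$ is nonzero exactly when $k+k_1+k_2$ is odd, so it \emph{vanishes} at $k=\pm k_1\pm k_2$. Otherwise it is a nonzero multiple of $kk_1k_2/((k^2-(k_1-k_2)^2)(k^2-(k_1+k_2)^2))$, which decays only like $|k-(k_1+k_2)|^{-1}$. Odd reflection makes $v^2$ even, so its sine coefficients are a convolution followed by a discrete Hilbert-type transform. Putting absolute values on the $k$-sums, as a Cauchy--Schwarz sup-integral argument does, therefore loses a logarithm.

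\textbf{The missing idea.} This is the whole of the paper's proof: the dual test function goes into $L^4$ by the same Strichartz embedding as $v$, so no resonance analysis is needed. For the first piece of the $Y^{0,b}$ norm, take $\widehat{G}_k=\mathbf{1}_{\{|\omega_k(\xi)|\ge 1\}}\langle\tau-\omega_k(\xi)\rangle^{-b}g_k$ with $\|g\|_{L^2}\le 1$. Then $\|G\|_{X^{0,b}}=\|g\|_{L^2}\le 1$. The embedding $X^{0,b}\hookrightarrow L^4_{x,y,t}$ for $b>3/8$, which the paper invokes without proof, gives $\|G\|_{L^4}\lesssim 1$. Hölder with $\frac14+\frac14+\frac12=1$ then yields $|\langle\partial_x(v^2),G\rangle|=2|\langle vv_x,G\rangle|\le 2\|G\|_{L^4}\|v\|_{L^4}\|v_x\|_{L^2}\lesssim\|v\|_{X^{0,b}}\|v\|_{L^2_tH^1}$.

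Equivalently, $vv_x\in L^{4/3}$ and $L^{4/3}\hookrightarrow (X^{0,b})^{*}$. This is what your ``Strichartz-type bound'' in Case 2 actually gives, not $L^2_tL^q$. Because the pairing is done in physical space via Parseval in the sine basis, the product structure of the $e_k$ never enters.

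The low-frequency piece is simply $M_0^2\|v^2\|_{L^2}^2=M_0^2\|v\|_{L^4}^4$, again without any $L^\infty_tL^2$. Contrary to your last paragraph, $b>3/8$ is precisely what supplies the $L^4$ gain. Your fallback already places $v$ in $L^4$; it only misses doing the same for the test function.

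\textbf{Third piece.} Your treatment is correct. Cauchy--Schwarz in $\tau$ with $\langle\sigma\rangle^{-1}=\langle\sigma\rangle^{-1/2-\epsilon}\langle\sigma\rangle^{-1/2+\epsilon}$ reduces it to the first piece with $b$ replaced by $1/2-\epsilon\ge b$. This is a slightly more economical alternative to the paper's $X^{0,s}$ test-function argument.
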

\begin{proof}
Let $f = \partial_{x}(v^2)$. Thus, we have $\widehat{f}_{k}(\xi,\tau) = i\xi \mathcal{F}(v^{2})_{k}(\xi,\tau)$. To prove \eqref{bilinear}, we must estimate the three components of the $Y^{0,b}$ norm. To this end, we split the domain of integration into two regions: $|\omega_{k}(\xi)| < 1$ and $|\omega_{k}(\xi)| \ge 1$.

\textbf{Case A.} $|\omega_{k}(\xi)| < 1$.

In this region, the structure of $\omega_{k}(\xi)$ ensures that the
domain is bounded in the spatial frequency variable; that is, there
exists a constant $M_0 > 0$ such that $|\xi| \le M_0$. Consider the
second term of the $Y^{0,b}$ norm given by
\begin{equation}I_{low} = \sum_{k=1}^{\infty} \iint_{|\omega_{k}| < 1} \frac{|\xi|^{2} |\mathcal{F}(v^{2})_{k}(\xi,\tau)|^{2}}{\langle \tau \rangle^{2(1-\alpha)}} d\xi d\tau.\end{equation}
Since $\langle \tau \rangle^{-2(1-\alpha)} \le 1$ for all $\tau \in
\mathbb{R}$, and using the bound on the spatial frequency, we
estimate $I_{low}$ by
\begin{equation}
    I_{low} \le M_0^2 \sum_{k=1}^{\infty} \iint_{\mathbb{R}^2} |\mathcal{F}(v^2)_k(\xi,\tau)|^2 d\xi d\tau.
\end{equation}
By Plancherel's theorem, we obtain that
\begin{equation}
    I_{low} \le M_{0}^{2} \|v^{2}\|_{L^{2}_{x,y,t}}^{2} = M_{0}^{2} \|v\|_{L^{4}_{x,y,t}}^{4}.
\end{equation}
By the Strichartz estimate for the ZK equation, the space $X^{0,b}$
embeds continuously into $L^{4}_{x,y,t}$ provided that $b > 3/8$.
Since $b \in (3/8, 1/2)$, there exists a constant $C_{S} > 0$ such
that
\begin{equation}
    I_{low} \le M_{0}^{2} C_{S}^{4} \|v\|_{X^{0,b}}^{4} \le C_{1} \|v\|_{Z^{b}}^{4}.
\end{equation}

\textbf{Case B.} $|\omega_{k}(\xi)| \ge 1$.

The contribution of this region to the $Y^{0,b}$ norm decomposes
into two terms, $I_{high,1}$ and $I_{high,2}$, to be analyzed
separately via duality arguments.

\textbf{Case B1.} Estimate for $I_{high,1}$.

Let the corresponding term of the $Y^{0,b}$ norm be given by
\begin{equation}
    I_{high,1} = \sum_{k=1}^{\infty} \iint_{|\omega_{k}(\xi)| \ge 1} \frac{|\xi|^{2}
    |\mathcal{F}(v^{2})_{k}(\xi,\tau)|^{2}}{\langle \tau - \omega_{k}(\xi)\rangle^{2b}} d\xi d\tau.
\end{equation}
By duality in $L^2$, we introduce a non-negative test function $g_{k}(\xi,\tau)$ with $\|g\|_{L^2} \le 1$.
We must evaluate the linear functional
\begin{equation}\label{6.6}
    J_{1} = \sum_{k=1}^{\infty} \iint_{|\omega_{k}(\xi)|\geq 1} \frac{|\xi| |\mathcal{F}(v^{2})_k(\xi,\tau)|}{\langle \tau
    - \omega_{k}(\xi)\rangle^{b}} g_{k}(\xi,\tau) d\xi d\tau.
\end{equation}
Defining the positive function $\widehat{V}_{j} = |\widehat{v}_{j}|$
and expanding the product into a convolution, we bound $J_{1}$ by
its extended form over $\mathbb{R}^{3}$. By the symmetry of the
convolution, we integrate over the region where $|\xi_{1}| \ge
|\xi_{2}|$, which implies $|\xi| \le |\xi_{1}| + |\xi_{2}| \le
2|\xi_{1}|$. Hence, \eqref{6.6} becomes
\begin{equation}
    J_{1} \leq C\sum_{k,k_{1}} \iiint_{\mathbb{R}^{3}} \left[ \frac{g_{k}(\xi,\tau)}{\langle \tau -
    \omega_{k}(\xi)\rangle^{b}} \right] \cdot \Big( |\xi_{1}| \widehat{V}_{k_{1}}(\xi_{1},\tau_{1}) \Big) \cdot \widehat{V}_{k_{2}}(\xi_{2},\tau_{2}) d\Gamma,
\end{equation}
where $d\Gamma$ denotes the measure under the constraint $\xi =
\xi_{1}+\xi_{2}$ and $\tau = \tau_{1}+\tau_{2}$. Applying
Plancherel's theorem and the generalized Hölder inequality with
indices $1/4 + 1/2 + 1/4 = 1$, we have
\begin{equation}
J_{1} \leq C\left\|\mathcal{F}^{-1}\left(\frac{g_{k}}{\langle \tau - \omega_{k}(\xi)
\rangle^{b}}\right)\right\|_{L^{4}} \cdot \left\|\mathcal{F}^{-1}\left(|\xi_1|
\widehat{V}_{k_{1}}\right)\right\|_{L^{2}} \cdot \left\|\mathcal{F}^{-1}\left(\widehat{V}_{k_{2}}\right)\right\|_{L^{4}}.
\end{equation}
Hence,
\begin{equation}
\left\|\mathcal{F}^{-1}\left(|\xi_1| \widehat{V}_{k_{1}}\right)\right\|_{L^{2}}= \|\partial_{x} v\|_{L^{2}} \leq \|v\|_{L^{2}_{t} H^{1}_{x,y}} \leq \|v\|_{Z^{b}},
\end{equation}
and
\begin{equation}
\left\|\mathcal{F}^{-1}\left(\widehat{V}_{k_{2}}\right)\right\|_{L^{4}} \leq C_{S} \|v\|_{X^{0,b}} \leq C_{S} \|v\|_{Z^{b}}.
\end{equation}
Since $\|g\|_{L^{2}} \le 1$, we have that $\mathcal{F}^{-1}\left(\frac{g_{k}}{\langle \tau - \omega_{k}(\xi) \rangle^{b}}\right) \in X^{0,b}$. Since $b > 3/8$, we apply the Strichartz embedding to obtain
\begin{equation}
\left\|\mathcal{F}^{-1}\left(\frac{g_{k}}{\langle \tau - \omega_{k}(\xi) \rangle^{b}}\right)\right\|_{L^4} \le C_{S}\left\|\mathcal{F}^{-1}\left(\frac{g_{k}}{\langle \tau - \omega_{k}(\xi) \rangle^{b}}\right)\right\|_{X^{0,b}} \le 1.\end{equation}
Therefore, $J_{1} \le C_{h,1}\|v\|_{Z^{b}}^{2}$, which implies $I_{high,1} \le C_{2}\|v\|_{Z^{b}}^{4}$.

\textbf{Case B2.} Estimate for $I_{high,2}$.

Let the corresponding term of the $Y^{0,b}$ norm be given by
\begin{equation}
I_{high,2} = \sum_{k=1}^{\infty} \int_{|\omega_{k}| \ge 1} \left( \int_{|\omega_{k}| \ge 1} \frac{|\xi| |\mathcal{F}(v^{2})_{k}(\xi,\tau)|}{\langle \tau - \omega_{k}(\xi) \rangle} d\tau \right)^{2} d\xi.
\end{equation}
By duality in the mixed norm $L^{2}_{\xi, k}$, we take test
functions $h_{k}(\xi)$ depending only on the spatial frequency, such
that $\|h\|_{L^{2}} \le 1$. Arguing via symmetry breaking, where
$|\xi| \le 2|\xi_{1}|$, and using Hölder's inequality as in case
B1, we get
\begin{equation}
I_{high,2}^{1/2} \le C\left\|\mathcal{F}^{-1} \left( \frac{h_{k}(\xi)}{\langle \tau - \omega_{k}(\xi) \rangle} \right)\right\|_{L^{4}_{x,y,t}} \cdot \left\|\mathcal{F}^{-1}\left(|\xi_1| \widehat{V}_{k_{1}}\right)\right\|_{L^{2}_{x,y,t}} \cdot \left\|\mathcal{F}^{-1}\left(\widehat{V}_{k_{2}}\right)\right\|_{L^{4}_{x,y,t}}.
\end{equation}
It remains to show that $\mathcal{F}^{-1} \left(
\frac{h_{k}(\xi)}{\langle \tau - \omega_{k}(\xi) \rangle} \right)
\in L^{4}_{x,y,t}$.

Fix $s$ such that $3/8 < s < 1/2$. The norm of $\mathcal{F}^{-1}
\left( \frac{h_{k}(\xi)}{\langle \tau - \omega_{k}(\xi) \rangle}
\right)$ in the space $X^{0,s}$ is given by
\begin{equation}
\left\|\mathcal{F}^{-1} \left( \frac{h_{k}(\xi)}{\langle \tau - \omega_{k}(\xi) \rangle} \right)\right\|_{X^{0,s}}^{2} = \sum_{k=1}^{\infty} \int_{\mathbb{R}} |h_{k}(\xi)|^{2} \left( \int_{\mathbb{R}} \frac{1}{\langle \tau - \omega_{k}(\xi) \rangle^{2(1 -s)}} d\tau \right) d\xi.
\end{equation}
Since $s < 1/2$, we have $2(1 - s) > 1$. Thus, the inner integral with respect to $\tau$ converges to a constant $C_{s} > 0$. It follows that
\begin{equation}
\left\|\mathcal{F}^{-1} \left( \frac{h_{k}(\xi)}{\langle \tau - \omega_{k}(\xi) \rangle} \right)\right\|_{X^{0,s}}^{2} \le C_{s} \sum_{k=1}^{\infty} \int_{\mathbb{R}} |h_{k}(\xi)|^2 d\xi = C_{s} \|h\|_{L^{2}_{\xi, k}}^{2} \le C_{s}.
\end{equation}
Since $\mathcal{F}^{-1} \left( \frac{h_{k}(\xi)}{\langle \tau - \omega_{k}(\xi) \rangle} \right) \in X^{0,s}$ with $s > 3/8$, the Strichartz embedding guarantees that
\begin{equation}\left\|\mathcal{F}^{-1} \left( \frac{h_{k}(\xi)}{\langle \tau - \omega_{k}(\xi) \rangle} \right)\right\|_{L^{4}} \le C_{s}.\end{equation}
Since the factors $\mathcal{F}^{-1}\left(|\xi_{1}| \widehat{V}_{k_{1}}\right)$ and $\mathcal{F}^{-1}\left(\widehat{V}_{k_{2}}\right)$ are bounded by $\|v\|_{Z^{b}}$ and $C_{S}\|v\|_{Z^{b}}$ respectively, we conclude that $I_{high,2}^{1/2} \le C_{h,2}\|v\|_{Z^{b}}^{2}$, that is, $I_{high,2} \le C_{3}\|v\|_{Z^{b}}^{4}$.

Combining the estimates for $I_{low}$, $I_{high,1}$, and $I_{high,2}$, the result follows.
\end{proof}

\section{Local well-posedness}

Recall the main result to be proven (see Section \ref{sub2.3}) which
establishes a local well-posedness for the IBVP \eqref{PVIC} with
the data $(u_{0}, g)\in L^{2}_{x,y}\times H^{1/3}_{t}L^{2}_{y}$.
\begin{proposition}{Theorem}
For any $u_{0}\in L^{2}(\Omega)$ and $g\in H^{1/3}_{t}(0,T;
L^{2}_{y}(0,B))$, there exist $T_{0}>0$, and a unique solution
$u(x,y,t)$ to \eqref{PVIC} such that
\begin{equation*}
u\in C([0,T_{0}];L^{2}(\Omega))\cap Z^{b}_{T_{0}}.
\end{equation*}
Furthermore, the data-to-solution map $(u_{0},g) \mapsto u$ is
Lipschitz continuous.
\end{proposition}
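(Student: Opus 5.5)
The plan is to set up a contraction on the Cauchy reformulation \eqref{PVI} and then restrict to $\Omega$. Extend $u_0$ to $\tilde u_0\in L^2(\mathbb{R}\times(0,B))$ with $\|\tilde u_0\|_{L^2}\le C\|u_0\|_{L^2(\Omega)}$ (zero extension in $x$). Extend $g$ to $\tilde g\in H^{1/3}_tL^2_y$ on $\mathbb{R}_t$ with comparable norm. Both extensions keep the sine structure in $y$, so \eqref{y=B=0} holds automatically mode by mode.

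For $v\in Z^b$ define
\begin{equation*}
\Phi(v)=\theta(t)S(t)\tilde u_0-\frac{\theta_T(t)}{2}\int_0^tS(t-t')\partial_x(v^2)\,dt'+\theta(t)\int_0^tS(t-t')\delta_0(x)f_v(y,t')\,dt',
\end{equation*}
where $\theta_T(t)=\theta(t/T)$. The forcing $f_v$ is produced by the inverse boundary operator \eqref{f(y,t)RL} applied to
\begin{equation*}
\tilde g_{1,v}=\tilde g-\theta S(t)\tilde u_0|_{x=0}+\tfrac12\theta_T\int_0^tS(t-t')\partial_x(v^2)\,dt'\Big|_{x=0},
\end{equation*}
so that $\Phi(v)|_{x=0}=\tilde g$ on $(0,T)$. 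By the multiplier bound $|M(k,\tau)|\lesssim|\tau|^{2/3}$ and \eqref{serief},
\begin{equation*}
\|f_v\|_{H^{-1/3}_tL^2_y}\le C\|\tilde g_{1,v}\|_{H^{1/3}_tL^2_y}.
\end{equation*}

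\textbf{Estimates on $Z^b$.} I would bound each piece of $\Phi(v)$ as follows.
\begin{itemize}
\item The free evolution is controlled by the first linear lemma.
\item The boundary term is controlled by Lemma \ref{lema5.3}.
\item The Duhamel term is controlled by the $Y^{0,b}\to X^{0,b}$ lemma.
\item The trace terms entering $\tilde g_{1,v}$ are controlled by the local smoothing/trace lemma for the Duhamel term, and for $S(t)\tilde u_0$ by the analogous Kato-smoothing trace bound.
\item The nonlinearity is placed in $Y^{0,b}$ by the bilinear estimate \eqref{bilinear}.
\end{itemize}
The $L^2_t\dot H^1$ part of the $Z^b$ norm is not covered by the stated lemmas. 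I would obtain it from the Kato smoothing $\|\partial_x S(t)u_0\|_{L^\infty_xL^2_{y,t}}\lesssim\|u_0\|_{L^2}$ in its local form $\|\nabla S(t)u_0\|_{L^2_{T}L^2_{loc}}$, applied mode by mode, with the $\partial_y$ part handled through the weight $\lambda_k$ against $3\xi^2+\lambda_k$. On a time interval of length $T$ one also needs a factor $T^{\epsilon}$. I would get it from the standard gain $\|\theta_T F\|_{X^{0,b'}}\le CT^{b-b'}\|F\|_{X^{0,b}}$ for $b'<b$, applied with $3/8<b'<b<1/2$, and place this small factor in front of the Duhamel term.

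Combining these gives
\begin{equation*}
\|\Phi(v)\|_{Z^b}\le C(\|u_0\|_{L^2}+\|g\|_{H^{1/3}_tL^2_y})+CT^\epsilon\|v\|_{Z^b}^2.
\end{equation*}
Since $v^2-w^2=(v-w)(v+w)$, the polarized form of \eqref{bilinear} gives
\begin{equation*}
\|\Phi(v)-\Phi(w)\|_{Z^b}\le CT^\epsilon(\|v\|_{Z^b}+\|w\|_{Z^b})\|v-w\|_{Z^b}.
\end{equation*}
Choose $R=2C(\|u_0\|+\|g\|)$ and $T_0$ with $4CT_0^\epsilon R<1$. Then $\Phi$ is a contraction on the ball of radius $R$ in $Z^b_{T_0}$, and restricting the fixed point to $\Omega\times(0,T_0)$ gives $u$. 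Lipschitz dependence on $(u_0,g)$ follows from the same difference estimate, since $\Phi$ is affine in the data.

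\textbf{Continuity in time.} For $u\in C([0,T_0];L^2)$, the free and boundary terms are continuous because $S(t)$ is unitary and, mode by mode, by the one-dimensional result of \cite{CK}. For the Duhamel term I would use the dual (third) component of the $Y^{0,b}$ norm, which is exactly the $L^2_\xi L^1_\tau$-type quantity giving $C_tL^2$ control.

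\textbf{Main obstacle.} I expect the hardest step to be the $T^\epsilon$ gain and the $L^2_T\dot H^1$ control. The first is needed because $b<1/2$, so the contraction factor must come from time localization. The second is needed because the smoothing bound is only local in $x$ for the global problem. Uniqueness holds in the class $Z^b_{T_0}$, that is, as uniqueness of the fixed point for the chosen extensions. I would not attempt to prove independence from the choice of extension.
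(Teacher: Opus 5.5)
You follow the paper's scheme: a contraction in $Z^b$ built from the stated linear lemmas and \eqref{bilinear}. The genuine gap is the $L^2(0,T;\dot H^1_{x,y})$ part of the $Z^b$ norm, and it cannot be obtained the way you propose.

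Kato smoothing controls $\nabla S(t)\tilde u_0$ in $L^\infty_xL^2_{y,t}$. That gives $L^2_TL^2$ control only over bounded $x$-intervals, with a constant growing like the square root of their length. $Z^b$ asks for the global $L^2_{x,y}$ norm, and that bound is in fact false. Each $S_k(t)$ is a unimodular Fourier multiplier in $x$, and the flow does not mix transverse modes. Hence $\|\nabla S(t)\tilde u_0\|_{L^2_{x,y}}=\|\nabla\tilde u_0\|_{L^2_{x,y}}$ for every $t$, so $\|S(t)\tilde u_0\|_{L^2(0,T;\dot H^1)}=T^{1/2}\|\nabla\tilde u_0\|_{L^2}$, which is infinite for $u_0\in L^2\setminus H^1$. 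Restricting to $\Omega$ does not help. A packet of frequency $N$ placed at distance $\gg N^2T$ from $x=0$ stays in $\Omega$ on $[0,T]$, and its $L^2_T\dot H^1$ norm is about $NT^{1/2}$ times its $L^2$ norm.

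So the free term alone already violates the bound you combine, $\|\Phi(v)\|_{Z^b}\le C(\|u_0\|+\|g\|)+CT^\epsilon\|v\|_{Z^b}^2$. Nothing in your list or in the stated lemmas maps the boundary or Duhamel terms into $L^2_T\dot H^1$ either. The paper does not fill this: \eqref{FP1}--\eqref{FP3} are $X^{0,b}$ bounds, and the $Z^b$ inequality is then simply asserted.

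You also cannot just replace the $\dot H^1$ component by a local or Kato-type norm. The proof of \eqref{bilinear} uses the global quantity $\|\partial_x v\|_{L^2_{x,y,t}}$ in the H\"older step $L^4\cdot L^2\cdot L^4$. Closing the argument needs an auxiliary norm that the free, boundary and Duhamel operators genuinely map into from $L^2$-level data, together with a bilinear estimate reproved in that norm. The weighted framework of \cite{F} does this, at the price of weighted data.

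Two further points concern the small factor.
\begin{enumerate}
\item Including the trace of the nonlinear Duhamel term in $\tilde g_{1,v}$ is right. The paper builds $f$ from $\tilde g-S(t)\tilde u_0|_{x=0}$ only, so its fixed point would not in general satisfy $u(0,y,t)=g$. But this makes the boundary part of $\Phi(v)-\Phi(w)$ bounded, through Lemma \ref{lema5.3}, the multiplier bound and the trace lemma, by $C\|\partial_x(v^2-w^2)\|_{Y^{0,b}}$ with no power of $T$. Your localization $X^{0,b}\to X^{0,b'}$ acts on the Duhamel term as an element of $X^{0,b}$, not on its trace in $H^{1/3}_tL^2_y$.
\item That localization estimate fails for these modified spaces. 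On $\{|\omega_k(\xi)|<1\}$ both $X^{0,b}$ and $X^{0,b'}$ carry the same weight $\langle\tau\rangle^{\alpha}$ with $\alpha>1/2$. Multiplication by $\theta_T$ gives no uniform gain on $H^\alpha_t$: on functions $h(t/T)$, both sides scale like $T^{1/2-\alpha}$. Smallness has to come from the forcing side (e.g.\ localizing $\partial_x(v^2)$ in time in a $Y$-type norm) or from a small-data reduction.
\end{enumerate}
Finally, a contraction gives uniqueness only inside the ball, not in the whole class $C([0,T_0];L^2)\cap Z^b_{T_0}$ asserted in the statement. The paper has the same limitation.
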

\begin{proof}
Define $E_{1}$ and $E_{2}$ as continuous linear extension operators (cf.\cite{CK}) such that
\begin{eqnarray}
\tilde{u}_{0} &=& E_{1}(u_{0}) \in L^{2}(\mathbb{R}^{2}) : \tilde{u}_{0}|_{\Omega} = u_{0} \text{ and } \|\tilde{u}_{0}\|_{L^{2}}
\le C_{1} \|u_{0}\|_{L^{2}},\\
\tilde{g} &=& E_{2}(g) \in H^{1/3}(\mathbb{R}_{t}; L^{2}(\mathbb{R}_{y})) : \tilde{g}|_{(0,B)\times(0,T)} = g \text{ and }
\|\tilde{g}\|_{H_{t}^{1/3}L_{y}^{2}} \le C_{2} \|g\|_{H_{t}^{1/3}L_{y}^{2}}.
\end{eqnarray}
Furthermore, let $\theta(t)$ be a smooth cutoff function such that $\theta(t) = 1$ for $t \in [-T, T]$ and $\text{supp}(\theta)
\subseteq [-2T, 2T]$. For $T > 0$, denote $\theta_{T}(t) = \theta(t/T)$.
Thus, for a solution $\tilde{u}$ to problem \eqref{PVI}, we obtain the global integral equation defined on $\mathbb{R}^2 \times \mathbb{R}$
\begin{equation}
\tilde{u} = \theta_{T}(t) S(t) \tilde{u}_{0} + \theta_{T}(t) \int_{0}^{t} S(t-t') \delta_{0}(x) f(y,t') dt' - \frac{1}{2} \theta_{T}(t) \int_{0}^{t} S(t-t') \partial_{x} (\tilde{u}^{2})(t') dt',
\end{equation}
where $f$ is defined by \eqref{f(y,t)RL}.

Defining the operator $\Phi(v)=\tilde{u}$, if we find a fixed point $v \in Z^{b}(\mathbb{R}^{2} \times \mathbb{R})$,
its restriction $u = v|_{\Omega_{T}}$ will be the desired local solution. Therefore, from the previous estimates, we have
\begin{eqnarray}
\|\theta_{T}(t) S(t) \tilde{u}_{0}\|_{X^{0,b}} &\le& C_{L} \|\tilde{u}_{0}\|_{L^2} \le C_{L} C_{1} \|u_{0}\|_{L^{2}(\Omega)}, \label{FP1}\\
\left\| \theta_{T}(t) \int_{0}^{t} S(t-t') \delta_{0}(x) f(y,t') dt' \right\|_{X^{0,b}} &\le& C_{B} \|\tilde{g}\|_{H_{t}^{1/3}L_{y}^{2}} \le C_{B} C_{2} \|g\|_{H_{t}^{1/3}L_{y}^{2}}, \label{FP2}\\
\left\| \theta_{T}(t) \int_{0}^{t} S(t-t') \partial_{x} (v^{2}) dt' \right\|_{X^{0,b}} &\le& C_{N} T^{\beta} \|\partial_{x} (v^{2})\|_{Y^{0,b}} \le \tilde{C}_{N} T^{\beta} \|v\|_{Z^{b}}^{2}. \label{FP3}
\end{eqnarray}
The exact value of $\beta > 0$ here can be computed like in either
\cite{LP}, or \cite{F}.

Write $M = C_{L} C_{1} \|u_{0}\| + C_{B} C_{2} \|g\|$ and set $K = \max\{1, \frac{1}{2}\tilde{C}_{N}\}$. For $v \in Z^{b}$, from \eqref{FP1}, \eqref{FP2} and \eqref{FP3}, we have
\begin{equation}
\|\Phi(v)\|_{Z^{b}} \le M + K T^{\beta} \|v\|_{Z^{b}}^{2}.
\end{equation}
Now, define: $B_{r} = \{ v \in Z^{b} \mid \|v\|_{Z^{b}} \le r \}$. For $\Phi(B_{r}) \subseteq B_{r}$, we require
\begin{equation}
\|\Phi(v)\|_{Z^{b}} \le M + K T^{\beta} r^{2} \le r.
\end{equation}
Choosing $r = 2M$, we have
\begin{equation}
\frac{r}{2} + K T^{\beta} r^{2} \le r \Rightarrow K T^{\beta} r \le \frac{1}{2}, \text{ i.e., } T^{\beta} \le \frac{1}{2Kr}.
\end{equation}
It remains to show the contraction. If $u, v \in Z^{b}$, we have
\begin{equation}
\|\Phi(u) - \Phi(v)\|_{Z^{b}} \le 2rK T^{\beta} \|u - v\|_{Z^{b}}.
\end{equation}
For a Lipschitz constant less than $1$ (say $1/2$), we require $2r K T^{\beta} \le 1/2$, to obtain
\begin{equation}
T^{\beta} \le \frac{1}{4Kr}.
\end{equation}

Therefore, there exists $T_0>0$ such that $B_r$ is a closed ball in
a Banach space $Z^{b}_{T_0}$, and $\Phi$ is a contraction. Hence,
there is a unique fixed point $v \in B_{r} \subset Z^{b}_{T_0}$.
Finally, since $u = v|_{\Omega_{T}}$, the result $u \in Z^{b}_{T_0}$
follows.
\end{proof}

\section*{Acknowledgments}
E. F. Avelino is supported by CAPES/Brazil - Finance Code 001.

\section*{References }

\begin{enumerate}


\bibitem{BF} J. L. Bona, A. S. Fokas, \textit{Initial-boundary-value problems for linear and integrable nonlinear dispersive partial differential equations}, Nonlinearity 21.10 (2008): T195-T203.

\bibitem{BSZ} J. L. Bona, S.-M. Sun, B.-Y. Zhang, \textit{Nonhomogeneous boundary-value problems for the Korteweg--de Vries equation in a quarter plane}, Trans. Amer. Math. Soc. \textbf{354} (2002), no. 2, 427--490.

\bibitem{JB} J. Bourgain, \textit{Fourier transform restriction phenomena for certain lattice subsets and applications to nonlinear evolution equations I, II}, Geom. Funct. Anal. \textbf{3} (1993), 107--156, 209--262.

\bibitem{CK} J. Colliander, C. E. Kenig, \textit{The generalized
Korteweg--de Vries equation on the half line}, Comm. Partial
Differential Equations \textbf{27} (2002), no. 11--12, 2187--2266.

\bibitem{F} A. Faminskii, \textit{An initial-boundary value problem in a strip for the Zakharov--Kuznetsov equation}, Ann. Inst. H. Poincaré Anal. Non Linéaire \textbf{25} (2018), no. 4, 649--676.


\bibitem{F95} A. Faminskii, \textit{The Cauchy problem for the Zakharov--Kuznetsov equation}, Differential Equations \textbf{31} (1995), 1002--1012.

\bibitem{GH} A. Grunrock, S. Herr, \textit{The Fourier restriction norm method for the Zakharov-Kuznetsov equation}. Discrete and Continuous Dynamical Systems, (2014), 34(5): 2061-2068.

\bibitem{TK} T. Kato, \textit{On the Korteweg--de Vries equation}, Manuscripta Math. \textbf{28} (1979), 89--99.

\bibitem{KPV} C. E. Kenig, G. Ponce, L. Vega, \textit{A bilinear estimate with applications to the KdV equation}, J. Amer. Math. Soc. \textbf{9} (1996), no. 2, 573--603.

\bibitem{kinoshita} S. Kinoshita, \textit{Global well-posedness for the Cauchy problem of the Zakharov-Kuznetsov equation in 2D}. Annales de l'Institut Henri Poincaré. C, Analyse non linéaire, Volume 38 (2021) no. 2, pp. 451-505.

\bibitem{FP} F. Linares, A. Pastor, \textit{Local and global well-posedness for the 2D generalized Zakharov-Kuznetsov equation}, J. Funct. Anal., Volume 260 (2011), pp. 1060-1085.

\bibitem{LP} F. Linares, G. Ponce, \textit{Introduction to Nonlinear Dispersive Equations}, Springer, 2015.

\bibitem{SL} F. Linares, J.-C. Saut, \textit{The Cauchy problem for the 3D Zakharov-Kuznetsov equation}, Discrete and Continuous Dynamical Systems \textbf{24} (2009), no. 2, 547–565.

\bibitem{MP} L. Molinet, D. Pilod, \textit{Bilinear Strichartz estimates for the Zakharov–Kuznetsov equation and applications}. Annales de l'Institut Henri Poincaré. C, Analyse non linéaire, Volume 32 (2015) no. 2, pp. 347-371.

\bibitem{MR} L. Molinet, F. Ribaud, \textit{Well-posedness results
for the Zakharov--Kuznetsov equation in dimension two}, J. Funct.
Anal. \textbf{245} (2007), no. 2, 328--364.

\bibitem{podlubny1999} I. Podlubny, \textit{Fractional Differential Equations}, Mathematics in Science and Engineering, vol. 198, Academic Press, San Diego, 1999.





\bibitem{TT} T. Tao, \textit{Multilinear weighted convolution of $L^2$ functions, and applications to nonlinear dispersive equations}, Amer. J. Math. \textbf{123} (2001), no. 5, 839--908.

\bibitem{zk} V. E. Zakharov, E. A. Kuznetsov, \textit{Three-dimensional solitons}. Sov. Phys. JETP. 39 (1974), 285.286.

\end{enumerate}

\end{document}